\definecolor{shadecolor}{rgb}{0.8,0.8,0.8}
\newtheorem{theorem}{Theorem}[section]
\newtheorem{definition}[theorem]{Definition}
\newtheorem{proposition}[theorem]{Proposition}
\newtheorem{lemma}[theorem]{Lemma}
\newcommand{\brk}[1]{\left(#1\right)}          
\newcommand{\Brk}[1]{\left[#1\right]}          
\newcommand{\BRK}[1]{\left\{#1\right\}}        
\newcommand{\Abs}[1]{\left| #1 \right|}        
\newcommand{\Norm}[1]{\left\| #1 \right\|}     
\newcommand{\pd}[2]{\frac{\partial#1}{\partial#2}}
\newcommand{\secref}[1]{Section~\ref{#1}}
\newcommand{\thmref}[1]{Theorem~\ref{#1}}
\newcommand{\propref}[1]{Proposition~\ref{#1}}
\newcommand{\lemref}[1]{Lemma~\ref{#1}}
\newcommand{\beq}{\begin{equation}}
\newcommand{\eeq}{\end{equation}}
\providecommand{\e}{\varepsilon}
\providecommand{\R}{\mathbb R}
\newcommand{\Textand}{\qquad\text{ and }\qquad}
\newcommand{\dist}{{\operatorname{dist}}}
\newcommand{\Hom}{{\operatorname{Hom}}}
\newcommand{\SO}{{\operatorname{SO}}}
\newcommand{\Vol}{{\operatorname{Vol}}}
\providecommand{\rank}{{\operatorname{rank}}}
\newcommand{\limn}{\lim_{n\to\infty}}
\newcommand{\g}{\mathfrak{g}}
\newcommand{\h}{\mathfrak{h}}
\renewcommand{\b}{\mathfrak{b}}
\newcommand{\frakX}{\mathfrak{X}}
\newcommand{\hn}{\mathfrak{n}}
\newcommand{\euc}{\mathfrak{e}}
\newcommand{\M}{{\mathcal M}}
\newcommand{\N}{{\mathcal N}}
\newcommand{\E}{{\mathcal E}}
\newcommand{\calC}{{\mathcal C}}
\newcommand{\bbN}{{\mathbb N}}
\newcommand{\Volume}{\textup{dVol}_\g}
\renewcommand{\Vol}{\textup{dVol}}
\newcommand{\tM}{\tilde{\M}}
\newcommand{\tF}{\tilde{F}}
\newcommand{\tg}{\tilde{\g}}
\newcommand{\tf}{\tilde{f}}
\newcommand{\Rd}{\R^{d+1}}
\newcommand{\Ppar}{P_\parallel}
\newcommand{\RomanNumeralCaps}[1]
{\MakeUppercase{\romannumeral #1}}
\renewcommand{\SO}{\operatorname{SO}}
\renewcommand{\O}{\operatorname{O}}
\newcommand{\II}{\RomanNumeralCaps{2}}
\newcommand{\Imm}{\operatorname{Imm}}
\newcommand{\extp}{\@ifnextchar^\@extp{\@extp^{\,}}}
\def\@extp^#1{\mathop{\bigwedge\nolimits^{\!#1}}}
\numberwithin{equation}{section}
\begin{document}
	\makeatletter
	
	\title{Asymptotic rigidity for shells in non-Euclidean elasticity
		}

	\author{Itai Alpern\footnote{Institute of Mathematics, The Hebrew University
	\newline RK was partially supported by the Israel Science Foundation (Grant No.~1035/17). CM was partially supported by the Israel Science Foundation (Grant No.~1269/19). }
		 \and Raz Kupferman\footnotemark[1] 
		   \and   Cy Maor\footnotemark[1] 
				}
	\date{}
	
	\maketitle
	
	\begin{abstract}
		
		We consider a prototypical ``stretching plus bending" functional of an elastic shell.
		The shell is modeled as a $d$-dimensional Riemannian manifold endowed, in addition to the metric, with a reference second fundamental form. The shell is 
		immersed into a $(d+1)$-dimensional ambient space, and 
		the elastic energy accounts for deviations of the induced metric and second fundamental forms from their reference values.
		Under the assumption that the ambient space is of constant sectional curvature, we prove that any sequence of immersions of asymptotically vanishing energy converges to an isometric immersion of the shell into ambient space, having the reference second fundamental form.
		In particular, if the ambient space is Euclidean space, then the reference metric and second fundamental form satisfy the Gauss-Codazzi-Mainardi compatibility conditions.
		This theorem can be viewed as a (manifold-valued) co-dimension 1 analog of Reshetnyak's asymptotic rigidity theorem.
		It also relates to recent results on the continuity of surfaces with respect to their fundamental forms.
		
		\bigskip
		\noindent \textbf{Keywords:} 	Rigidity, Riemannian manifolds, Shell Theory, Non-Euclidean elasticity.
		
	\end{abstract}

	
	\section{Introduction}
	\label{intro}
	
	The classical result, often referred to as the \emph{fundamental theorem of surface theory}, asserts that given a $ d $-dimensional, simply-connected Riemannian manifold $ (\M,\g) $ and  a symmetric tensor field $\b \in\Gamma(T^*\M\otimes T^*\M)$, there exists a smooth isometric immersion $ f:(\M,\g) \to \Rd  $ such that $ \g $ and $ \b $ are its first and second fundamental forms, if and only if $ \g $ and $ \b $ satisfy the Gauss-Codazzi-Mainardi (GCM) compatibility equations. In addition, $ f $ is unique up to isometries of $ \Rd $, i.e., the surface is uniquely determined by its fundamental forms, up to rigid motions (the two-dimensional version of this theorem appears in almost every book on differential geometry; the $d$-dimensional case can be found, e.g., in \cite{Ten71}).
	
	Shell theory is concerned with the elastic theory of thin bodies. A thin elastic body is modeled as a two-dimensional surface $\M$ endowed with a Riemannian metric $\g$ (often referred to as a reference first fundamental form) and a reference second fundamental form $\b$. A configuration of a thin body is an embedding of $\M$ into three-dimensional Euclidean space $\R^3$. 
	To each configuration $f$ corresponds an elastic energy penalizing for metric deformations; it comprises of a stretching term, accounting for in-plane deformations, and a bending term, accounting for out-of-plane buckling. A prototypical energy is 
	\beq
	\label{eq:energy_euc}
	\E_p(f) = \int_\M \brk {\kappa_s\, \dist^p_{\g,\euc}(df,O(\g,\euc)) + \kappa_b |df \circ(S-S_f)|_{\g,\euc}}^{p} \Volume.
	\eeq

	The first term in the integrand is the stretching energy density, where $ \dist_{\g,\euc}(df,O(\g,\euc)) $ is the pointwise distance of $df$ from the set of orthogonal transformations $(T\M,\g)\to \R^3$. The second term in the integrand is the bending energy density. 
	Here $S$, defined by $(S(X),Y)_\g = \b(X,Y)$, is viewed as a reference shape operator, and $S_f$ is the shape operator of $f$ in $\R^3$, defined by $(S_f(X),Y)_{f^*\euc} = \b_f(X,Y)$, where $\b_f$ is the second fundamental form of $f$ in $\R^3$ (see \secref{subsec:Second fundamental form}); $(\cdot,\cdot)_\g$ and $(\cdot,\cdot)_{f^*\euc}$ are the inner-products corresponding to the metric $\g$ and the pullback metric $f^*\euc$, with respect to which the norm $|\cdot|_{\g,\euc}$ and the distance $\dist_{\g,\euc}$ are evaluated (see Section~\ref{sec:gen_not}).
	The parameters $\kappa_s$ and $\kappa_b$ are the stretching and bending moduli, which in this work are immaterial, and can be taken equal to one.
	A similar energy which appears often in the physics literature (see, e.g., \cite{KES07,DHS11,GV11,GSD16}), is
	\beq
	\label{eq:phys_energy}
	\E(f) = \int_\M \brk{\kappa_s\, |\g - f^*\euc|^p + \kappa_b |\b-\b_f|^p} \Volume,
	\eeq
	where $ f^*\euc$ is the pullback metric (the Cauchy-Green tensor).
	Both \eqref{eq:phys_energy}  and \eqref{eq:energy_euc} can be derived as small-strain formal asymptotic limits of thin finite elasticity (they differ by a higher-order term), so in this sense they are equivalent theories; we discuss the applicability of our main theorem to the energy \eqref{eq:phys_energy} in \secref{sec:Discussion}.
	
	The fundamental theorem of surfaces interconnects with shell theory in the following way: if $f:\M\to\R^{3}$ is a smooth immersion having zero elastic energy (a \emph{reference configuration}, in the terminology of materials science), then the first fundamental form of $f$ in $\R^{3}$ must equal $\g$ whereas $S_f$ must equal $S$, meaning that the second fundamental form $\b_f$ of $f$ in $\R^{3}$ equals $\b$. 
	If the reference forms $\g$ and $\b$, which are intrinsic properties of the body, do not satisfy the GCM equations, then there is no such immersion $f$, i.e., there do not exist  smooth strain-free reference configurations. 
	Such a scenario is typical to so-called \emph{pre-stressed materials}, leading to the concept of \emph{incompatible shell theory} \cite{ESK09a,KS14}.
	
	Thus, if $\g$ and $\b$ are compatible, then there exists a zero-energy reference configuration.
	As shown in \cite[Cor.~5.4]{BLS16}, \cite[Lem.~3.1]{MS19}, the converse is true as well, i.e., if there exists a configuration $f$ (which does not need to be, a-priori, smooth), such that $\E(f)=0$, then $\g$ and $\b$ are compatible.
	
	As in most non-convex variational problems, it is a priori hard to determine whether $ \E $ admits a minimizer.
	It is therefore natural to ask whether $\inf\E=0$ implies that $\g$ and $\b$ are compatible, and in this case, whether
	any minimizing sequence converges (modulo a subsequence) to a zero-energy reference configuration.
	This equivalence, namely that the incompatibility of $\g$ and $\b$ (a geometric incompatibility) will result in some finite elastic energy (an elastic incompatibility), is assumed implicitly in non-Euclidean elasticity and incompatible shell theory.
	In this paper, we prove this equivalence in a more general setting.
	
	The natural space to study elastic energies of the type \eqref{eq:energy_euc} is the space of Sobolev immersions, given by the following definition:
	\begin{definition}\label{def:imm_p_Euc}
		Let $(\M,\g)$ be a compact, connected, oriented, $d$-dimensional Riemannian manifold with Lipschitz-continuous boundary.
		For $p\in[1,\infty)$, the \textbf{space of $p$-Sobolev immersions} is given by
		\[
		\Imm_p(\M;\Rd) :=\BRK{f\in W^{1,p}(\M;\Rd) ~:~ \rank \, {df}= d \,\, \text{a.e.}, \; \hn_f \in W^{1,p}(\M;\Rd)},
		\]
		where $ \hn_f $ is the unit normal vector field to $ f $ (see \secref{subsec:sobolev_immersions}).
		\\
		We say that a sequence $f_n\in \Imm_p(\M;\Rd)$ converges to an immersion $f$ in $\Imm_p(\M;\Rd)$ if $ f_n \to f $ in $ W^{1,p}(\M;\Rd) $ and  $ \hn_{f_n} \to \hn_f $ in $ W^{1,p}(\M;\Rd) $.
	\end{definition}
	
	In this paper we prove the following theorem:
	\begin{theorem}
		\label{thm:main_euc}
		Let $(\M,\g)$ be as in Definition~\ref{def:imm_p_Euc}.
		Let $\b \in\Gamma(T^*\M\otimes T^*\M)$ be a symmetric tensor field and let $ 1 \leq p < \infty $. 
		Consider the energy functional $ \E_p:\Imm_p(\M,\Rd) \to \R $ defined by
		\[
		\E_p(f) = \int_\M \brk{\dist^p_{\g,\euc}(df,O(\g,\euc)) + |df \circ(S-S_f)|_{\g,\euc}^{p}} \Volume \;,
		\]
		where $ \dist_{\g,\euc}(df,O(\g,\euc)) $ is the pointwise distance of $df$ from the set of orthogonal transformations $(T\M,\g) \to (\Rd,\euc)$; the operator $S: T\M \to T\M$ is the self-adjoint linear map associated with $ \b $, i.e., $(S(u),v)_{\g} = \b(u,v)$; and $ S_f $ is the shape operator associated with $ f $.
		
		\noindent Suppose that there exists a sequence $f_n\in \Imm_p(\M;\Rd)$ satisfying 
		\[
		\limn \E_p(f_n) \to 0.
		\] 
		Then, there exists a subsequence of $f_n$ converging (modulo translations) in $ \Imm_p(\M;\Rd) $ to a smooth isometric immersion $f:\M \rightarrow \Rd$ (i.e., $f^*\euc = \g$), such that $ \b $ is the second fundamental form of $ f $ in $\Rd$.
		\\
		In particular, $\g$ and $\b$ satisfy the GCM compatibility equations.
	\end{theorem}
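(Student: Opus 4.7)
My plan is to extract weakly convergent subsequences of $f_n$ and $\hn_{f_n}$ in $W^{1,p}$, identify the weak limits as a smooth isometric immersion $f$ with second fundamental form $\b$, and then upgrade the convergence to the strong topology of $\Imm_p(\M;\Rd)$.

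First I would establish a priori bounds. The stretching term gives $|df_n(x)| \leq \dist(df_n(x),O(\g,\euc)) + C$ pointwise, where $C$ depends only on $\g$, so $df_n$ is bounded in $L^p(\M)$; after subtracting the means $c_n$, the Poincar\'e inequality gives $f_n - c_n$ bounded in $W^{1,p}$. For the normals I would use the pointwise Weingarten identity $\nabla\hn_f = -df\circ S_f$, which holds a.e.\ for every $f\in\Imm_p$ by the very definition of $S_f$ combined with $df\cdot\hn_f = 0$. Under this identity the bending integrand becomes $|df_n\circ S + \nabla\hn_{f_n}|^p$, so the bending term controls $\nabla\hn_{f_n}$ in $L^p$ up to the $L^p$-bounded quantity $df_n\circ S$. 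Together with $|\hn_{f_n}|=1$, this makes $\hn_{f_n}$ bounded in $W^{1,p}$. Passing to a subsequence I obtain $f_n - c_n \wc f$ and $\hn_{f_n} \wc \hn$ in $W^{1,p}$; by Rellich and interpolation with the $L^\infty$ bound, $\hn_{f_n}\to\hn$ strongly in every $L^q(\M;\Rd)$.

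Next I would pass to the limit in the pointwise identities $|\hn_{f_n}|^2 = 1$, $\hn_{f_n}\cdot df_n = 0$, and $\nabla\hn_{f_n} + df_n\circ S = o_{L^p}(1)$ (the last from the bending term), using weak$\times$strong pairings. This yields $|\hn|=1$, $\hn\cdot df = 0$, and $\nabla\hn = -df\circ S$ almost everywhere. The main obstacle is to establish the isometric identity $f^*\euc = \g$, since the pullback metric is quadratic in $df$ and weak $L^p$ convergence of $df_n$ does not pass through. My plan for this step is to promote $df_n$ to strong $L^p$ convergence via a codimension-$1$ rigidity argument: the stretching term yields a measurable selection $Q_n$ of $O(\g,\euc)$ with $\|df_n - Q_n\|_{L^p}\to 0$, which reduces strong convergence of $df_n$ to that of $Q_n$. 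Fiberwise compactness of $O(\g,\euc)_x$ produces a.e.\ subsequential limits of $Q_n(x)$; the strong convergence of $\hn_{f_n}$ fixes the image hyperplane $\hn(x)^\perp$; and the $W^{1,p}$-control on both $df_n$ and $\hn_{f_n}$ should rule out residual rotational oscillation of $Q_n$ within that hyperplane, e.g.\ through a Young measure or compensated-compactness argument. This is the technical heart of the proof, since the classical Reshetnyak and Friesecke--James--M\"uller rigidity results are codimension-$0$ and do not apply directly; the bending term must play a decisive role.

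Once $f^*\euc = \g$ is in hand, $f$ is an isometric immersion and $\hn = \hn_f$; the identity $\nabla\hn = -df\circ S$ then forces $S_f = S$ and hence $\b_f = \b$. Smoothness follows by elliptic bootstrapping from the Gauss equation $\partial_i\partial_j f = \Gamma^k_{ij}(\g)\,\partial_k f + \b_{ij}\,\hn$: with $\g,\b\in C^\infty$ and $df\in L^p$ we first obtain $f\in W^{2,p}$, and iterating yields $f\in C^\infty$. The Gauss--Codazzi--Mainardi equations then follow from the classical fundamental theorem of surface theory applied to the smooth immersion $f$. Finally, the strong $W^{1,p}$ convergence required for convergence in $\Imm_p(\M;\Rd)$ follows from the strong $L^p$ convergence of $df_n$ established above, together with $\nabla\hn_{f_n} = -df_n\circ S_{f_n}\to -df\circ S = \nabla\hn_f$ in $L^p$, which comes from the bending term combined with the strong convergence of $df_n$.
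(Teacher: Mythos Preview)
Your proposal has a genuine gap at precisely the step you flag as ``the technical heart'': upgrading weak convergence of $df_n$ to strong $L^p$ convergence (equivalently, showing that the weak limit $f$ satisfies $f^*\euc=\g$). You propose to rule out ``residual rotational oscillation of $Q_n$ within the hyperplane $\hn(x)^\perp$'' using ``the $W^{1,p}$-control on both $df_n$ and $\hn_{f_n}$'' together with a Young-measure or compensated-compactness argument. But there is no $W^{1,p}$-control on $df_n$: membership in $\Imm_p$ gives only $df_n\in L^p$, with no bound on its derivatives. The only first-order information you have on $df_n$ comes indirectly through $d\hn_{f_n}=-df_n\circ S_{f_n}$, and when $S$ is degenerate (e.g.\ $\b=0$, a flat plate) this carries no information about the tangential part of $df_n$ at all. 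Without further structure, nothing prevents $Q_n$ from oscillating by $\SO(d)$-rotations inside the fixed hyperplane $\hn^\perp$, and standard compensated-compactness tools do not obviously apply: the frame $(df_n\mid\hn_{f_n})$ is not the gradient of a single $\R^{d+1}$-valued map on a $(d+1)$-dimensional domain, so the Jacobian/null-Lagrangian machinery that drives Reshetnyak's theorem is unavailable.

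The paper's proof supplies exactly the missing idea: it \emph{manufactures} the codimension-$0$ structure by thickening. One sets $\tilde{\M}=\M\times[-\e,\e]$, extends each $f_n$ to $F_n(p,t)=f_n(p)+t\,\hn_{f_n}(p)$, and equips $\tilde{\M}$ with the metric $G=\pi^*\g-2t\,\pi^*\b+t^2\pi^*S^*\g+dt^2$, engineered so that a hypothetical zero-energy immersion would extend to a $G$-isometry. A direct computation (using $dF_n=\pi^*df_n-t\,\pi^*(df_n\circ S_{f_n})+dt\otimes(\hn_{f_n}\circ\pi)$) shows $\int_{\tilde{\M}}\dist^p_{G,\euc}(dF_n,\SO(G,\euc))\,\Vol_G\le C\,\E_p(f_n)$: the stretching term controls the tangential block and the bending term controls the $t$-derivative block. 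Now the codimension-$0$ Riemannian Reshetnyak theorem applies and yields a smooth local isometry $F:(\tilde{\M},G)\to\R^{d+1}$ with $F_n\to F$ strongly in $W^{1,p}$ (modulo translations); restricting to $\M\times\{0\}$ gives the isometric immersion $f$ with second fundamental form $\b$, and a separate averaging argument transfers the strong $W^{1,p}(\tilde{\M})$ convergence of $F_n$ down to strong $W^{1,p}(\M)$ convergence of $f_n$ and $\hn_{f_n}$. In short, the bending term does play the decisive role you anticipate, but its mechanism is to make the \emph{thickened} map close to an isometry, not to provide direct compactness for $df_n$ on $\M$.
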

	
	We further generalize \thmref{thm:main_euc} by changing the target manifold from $ \Rd $ into $(\N,\h)$, a compact, connected, oriented, $(d+1)$-dimensional Riemannian manifold (with no boundary) of constant sectional curvature $ \kappa $.
	To this end, we first define the space of Sobolev immersions between manifolds:
	
	\begin{definition}\label{def:imm_p_N}
		Let $(\M,\g)$ be a compact, connected, oriented, $d$-dimensional Riemannian manifold with Lipschitz-continuous boundary,
		and let $(\N,\h)$ be a connected, oriented, $(d+1)$-dimensional Riemannian manifold (with no boundary).
		For $p\in [1,\infty)$, the \textbf{space of $\N$-valued $p$-Sobolev immersions} is given by
		\[
		\Imm_p(\M;\N):=\BRK{f\in W^{1,p}(\M;\N) ~:~ \rank \, {df}= d \,\, \text{a.e.}, \; \hn_f \in W^{1,p}(\M;T\N)}.
		\]
		where $ \hn_f $ is the unit normal vector field to $ f $.
		
		\noindent We say that a sequence $f_n\in \Imm_p(\M;\N)$ converges to an immersion $f$ in $\Imm_p(\M;\N)$ if $ f_n \to f $ in $ W^{1,p}(\M;\N) $ and  $ \hn_{f_n} \to \hn_f $ in $ W^{1,p}(\M;T\N) $.
	\end{definition}
	
	\begin{theorem}
		\label{THM:MAIN_CONST_CURV}
		Let $(\M,\g)$ and $(\N,\h)$ be as in Definition~\ref{def:imm_p_N}, and assume that $\N$ has constant sectional curvature $ \kappa $.
		Let $\b \in\Gamma(T^*\M\otimes T^*\M)$ be a symmetric tensor field and let $ 1 \leq p < \infty $. 
		Consider the energy functional $ \E_p:\Imm_p(\M;\N) \to \R $ defined by
		\beq\label{eq:E_pMN}
		\E_p(f) = \int_\M \brk{\dist^p_{\g,\h}(df,O(\g,\h)) + |df \circ(S-S_f)|_{\g,\h}^{p}} \Volume \;,
		\eeq
		where $ \dist_{\g,\h}(df,O(\g,\h)) $ is the pointwise distance of $df$ from the set of orthogonal transformations $(T\M,\g) \to (T\N,\h)$.
		
		\noindent Suppose that there exists a sequence $f_n \in \Imm_p(\M;\N)$ satisfying 
		\[
		\limn \E_p(f_n) \to 0.
		\] 
		Then there exists a subsequence of $f_n$ converging in $\Imm_p(\M;\N)$ to a smooth isometric immersion $f:(\M,\g) \to (\N,\h)$, such that $ \b $ is the second fundamental form of $ f $ in $ \N $.
	\end{theorem}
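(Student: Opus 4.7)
My plan is a direct-method argument: obtain uniform $W^{1,p}$ bounds, extract weakly convergent subsequences in the manifold-valued Sobolev spaces, identify the limit by lower semicontinuity, upgrade to strong convergence via a Reshetnyak-type rigidity, and conclude smoothness via the fundamental theorem of surface theory in constant-curvature targets.

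First I would establish uniform $W^{1,p}$ bounds on the pair $(f_n,\hn_{f_n})$. Since $\E_p(f_n)\to 0$, the stretching term gives $\dist_{\g,\h}(df_n,\O(\g,\h))\to 0$ in $L^p$; as elements of $\O(\g,\h)$ have pointwise Frobenius norm $\sqrt{d}$, this bounds $\|df_n\|_{L^p}$, hence $f_n$ in $W^{1,p}(\M;\N)$. The Weingarten identity $\nabla^\h\hn_{f_n} = -df_n\circ S_{f_n}$, combined with the bending term controlling $df_n\circ(S-S_{f_n})$ in $L^p$ and $df_n\circ S$ being uniformly bounded, yields a $W^{1,p}$ bound on $\hn_{f_n}$. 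Embedding $(\N,\h)\hookrightarrow\R^K$ isometrically and applying weak compactness, a subsequence satisfies $f_n\wc f$ in $W^{1,p}(\M;\N)$ and $\hn_{f_n}\wc\hn$ in $W^{1,p}(\M;T\N)$, with strong $L^p$ (and pointwise a.e.) convergence by Rellich--Kondrachov.

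Next I would identify the limit. Weak lower semicontinuity of the stretching functional forces $df\in\O(\g,\h)$ almost everywhere, so $f^*\h=\g$ and $f$ is an isometric immersion; the relations $|\hn_{f_n}|_\h=1$ and $\innerh{df_n(\cdot)}{\hn_{f_n}}=0$ pass to the limit, identifying $\hn=\hn_f$. Using strong $L^p$ convergence of $df_n$ and the vanishing bending term, Weingarten's identity then yields $\b_f=\b$ almost everywhere.

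The main obstacle is upgrading weak convergence to strong convergence in $\Imm_p(\M;\N)$. Since the integrand $|A|^p_{\g,\h}$ on $\O(\g,\h)$ equals the constant $d^{p/2}$, the triangle inequality $\big||df_n|_{\g,\h} - \sqrt{d}\big| \leq \dist_{\g,\h}(df_n,\O(\g,\h))$ gives $\|df_n\|_{L^p}\to \|df\|_{L^p}$; combined with weak convergence, a Reshetnyak-type rigidity argument (using uniform convexity of $L^p$ for $p>1$ and a more delicate projection argument for $p=1$) promotes weak to strong. The analogous argument using the bending term and the identity $\nabla^\h\hn_{f_n} + df_n\circ S \to 0$ in $L^p$ yields $\nabla^\h\hn_{f_n}\to \nabla^\h\hn_f$ strongly. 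Executing these comparisons requires working in the varying pullback bundles $f_n^*T\N$; this is where the constant-curvature hypothesis enters, as parallel transport in $(\N,\h)$ then admits uniform estimates independent of $n$. Finally, smoothness follows from the fundamental theorem of surface theory in constant-curvature ambient spaces, which produces a smooth immersion realizing the compatible data $(\g,\b)$, identified with $f$ by rigidity of isometric immersions up to ambient isometry.
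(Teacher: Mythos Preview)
Your proposal has a fundamental gap at the lower-semicontinuity step. The stretching functional $f \mapsto \int_\M \dist^p_{\g,\h}(df, O(\g,\h))\,\Volume$ is \emph{not} weakly lower semicontinuous on $W^{1,p}$, because $\xi \mapsto \dist^p(\xi, O(\g,\h))$ is not quasiconvex. A one-dimensional example makes this transparent: for maps $[0,1]\to\R^2$ the set $O(\g,\euc)$ is the unit circle; a zigzag $f_n$ whose derivative alternates between two distinct unit vectors has $\dist(df_n, O)\equiv 0$, yet $df_n$ converges weakly to the average, which lies strictly inside the disk. Hence your deduction that $df\in O(\g,\h)$ a.e.\ from weak convergence fails, and with it the norm-matching argument for strong convergence (in the example $\|df_n\|_{L^p}\equiv 1$ but $\|df\|_{L^p}<1$, so Radon--Riesz does not apply). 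This is not a technicality: Reshetnyak-type rigidity is precisely the assertion that strong convergence holds \emph{despite} the failure of weak lower semicontinuity, and in codimension~$1$ the stretching term alone cannot rule out such oscillations---that is the message of Nash--Kuiper. The bending term is essential for rigidity itself, not merely for identifying $\b_f$ afterwards, whereas your argument decouples the two terms.

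The paper's proof is organized around exactly this point and takes a completely different route. It thickens $\M$ to $\tM=\M\times[-\e,\e]$, endows $\tM$ with a metric $G$ built from $\g$ and $\b$, and extends each $f_n$ along normal geodesics in $\N$ to $F_n:\tM\to\N$. The key estimate (\propref{SO dist bound}) is $\int_{\tM}\dist^p_{G,\h}(dF_n,\SO(G,\h))\,\Vol_G\le C\,\E_p(f_n)$, in which the stretching and bending contributions of $f_n$ combine to control the \emph{equidimensional} distortion of $F_n$. The codimension-$0$ Riemannian Reshetnyak theorem \cite{KMS19} then yields strong $W^{1,p}$ convergence of $F_n$ to a smooth local isometry $F$, and the remaining work is to transfer this back to the zero slice. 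The constant-curvature hypothesis enters not through parallel-transport estimates in the comparison of pullback bundles, as you suggest, but because Jacobi fields along normal geodesics then have the explicit form \eqref{eq:dF} depending only on $\g$, $\b$, and $\kappa$; this is what allows $G$ in \eqref{eq:G} to be defined independently of the unknown limit $f$.
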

	
	The generalization to a non-flat ambient space is not only a mathematical extension. Even without resorting to general relativity, the immersion of elastic bodies into non-flat target manifolds has attracted much interest in recent years in the physics community; see e.g., \cite{AKMMS16}.
	
	Before describing the sketch of the proof and relating our results to the surface rigidity literature, we make the following observation:
	Shell theory is a limit of ``bulk" elasticity (the elastic body and the ambient space are of the same dimension), where the elastic body has one slender dimension. A central theme in the theory of elasticity is the derivation of dimensionally-reduced theories, such as plate, shell and rod theories, as limits of bulk elasticity, in which the elastic body is a $3 $-dimensional Riemannian manifolds $(\tM,G)$ embedded in a Euclidean space of the same dimension. For $F:\tM\to \R^3$,  the elastic energy accounts for metric distortions, a prototypical energy being
	\beq
	E_{p}(F) = \int_{\tM} \dist^{p}_{G,\euc}(dF,\SO(G,\euc))\, \Vol_G,
	\label{eq:E3}
	\eeq  
	where $\SO(G,\euc)$ is the bundle of orientation-preserving isometries $(T\tM,G)\to\R^3$. Note that there is a meaning to orientation-preservation only when the source and the target are of the same dimension.
	
	Liouville's theorem, in its weak form, asserts that $E_{p}(F)=0$ if and only if $F$ is a smooth  local isometry of $(\tM,G)$ in $\R^3$, and in particular, $G$ has zero curvature. The asymptotic generalization of Liouville's theorem is Reshetnyak's celebrated rigidity theorem \cite{Res67}:
	
	\begin{quote}
		\emph{
			Let $\Omega\subset \R^d$ be an open, connected bounded domain, and let $1\le p<\infty$.
			If $F_n\in W^{1,p}(\Omega;\R^d)$ satisfy
			\[
			\lim_{n\to \infty} \int_\Omega \dist^p(dF_n, \SO(d))\,dx = 0,
			\]
			then $F_n$ has a subsequence converging (modulo translations) in the strong $W^{1,p}(\Omega;\R^d)$ topology to an affine mapping.
		}
	\end{quote}
	
	In \cite[Thm.~3]{KMS19}, Reshetnyak's asymptotic rigidity theorem was generalized to a Riemannian setting:
	
	\begin{quote}
		\emph{
			Let $(\M,\g)$ and $(\N,\h)$ be compact, oriented, $d$-dimensional Riemannian manifolds with Lipschitz boundary. Let $1\le p<\infty$ and let $F_n\in W^{1,p}(\M;\N)$ be a sequence of mappings satisfying  
			\[
			\dist_{\g,\h}(d F_n, \SO(\g,\h))\to 0 \qquad\text{in $L^p(\M)$}.
			\]
			Then, $\M$ can be immersed isometrically into $\N$, and there exists a subsequence of $F_n$ converging in $W^{1,p}(\M;\N)$ to a smooth local isometry $F:\M\to\N$.  
		}
	\end{quote}
	
	The exact statement of \cite[Thm.~3]{KMS19} refers to a $C^1$ boundary, but it holds (with the same proof) to Lipschitz boundary as well.
	The same theorem holds also if $(\N,\h)$ is replaced with $\R^d$, in which case the convergence is modulo translations \cite[Cor.~7]{KMS19} (see also \cite[Thm~2.2]{LP11}).
	We note that a local isometry $F:\M\to\N$ may be considered as a rigid mapping, as it is uniquely determined by $ F(p) $ and $ dF_p $ at a single point $ p\in \M $. In this context, \thmref{thm:main_euc} can be viewed as a co-dimension $ 1 $ variant of Reshetnyak's theorem, where $ {\dist^p_{\g,\euc}(df,O(\g,\euc)) + |df \circ(S-S_f)|_{\g,\euc}^{p}} $ is a measure of the local distortion of $f$. 
	Similarly, \thmref{THM:MAIN_CONST_CURV} can be viewed as a co-dimension $ 1 $ variant of \cite[Thm.~3]{KMS19}; 
	in fact, \cite[Thm.~3]{KMS19} provides a main tool in proving \thmref{THM:MAIN_CONST_CURV}.
	Unlike \cite[Thm.~3]{KMS19}, \thmref{THM:MAIN_CONST_CURV} is currently limited to target manifolds of constant sectional curvature. 
	
	\paragraph{\textbf{Sketch of proofs}}
	We present a sketch of the proof of \thmref{THM:MAIN_CONST_CURV}, emphasizing its main ideas; the proof of \thmref{thm:main_euc} follows a similar line.
	We thicken $\M$ into a $ (d+1) $-dimensional manifold $ \tM=\M\times [-\e,\e] $. 
	We endow $\tM$ with a metric $G$, satisfying that $\M$ is isometric to $ \M\times\{0\} $ and $ \b $ is the second fundamental form of $ \M\simeq \M\times\{0\}$ in $\tM$. 
	We then extend each $ f_n\in \Imm_p(\M;\N)$ into a map $ F_n\in W^{1,p}(\tM;\N) $ and prove that $\dist_{G,\h}(dF_n,SO(G,\h))\to 0$ in $L^p(\tM) $.
	It follows from {\cite[Thm.~3]{KMS19}} that there exists a subsequence of $F_n$ converging in $ W^{1,p}(\tM;\N) $ to a smooth local isometry $ F:(\tM,G)\to (\N,\h) $.
	
	A priori, since $ \M\times\{0\} $ is of measure zero in $\tM$, the convergence of $F_n$ to $F$  in $ W^{1,p}(\tM;\N)$ does not necessarily imply the convergence of $ F_n|_{\M\times\{0\}}=f_n $ in $ W^{1,p}(\M;\N) $ to $ F|_{\M\times\{0\}}:=f $. 
	However, due to the choice of the metric $G$ and the specific way in which $f_n$ were extended to $F_n$,  we show that the sequence $f_n$ does converge to $f$ in $ W^{1,p}(\M;\N) $. 
	In addition, $ f:\M \to \N $ is an isometric immersion and $ \b $ is the second fundamental form associated with it. 
	Lastly, from the convergence of $ f_n $ to $f$ in $ W^{1,p}(\M;\N) $ and the fact that $ \limn \E_p(f_n) = 0 $ we deduce the convergence of $ \hn_{f_n} \to \hn_f $ in $ W^{1,p}(\M;T\N) $.

	\paragraph{\textbf{Related work} }
	The fundamental theorem of surface theory asserts that a surface in Euclidean space is uniquely determined (up to isometries) by its first and second fundamental forms. A natural question is in which topologies the relation between compatible fundamental forms $ \g,\b $ and the corresponding isometric immersions $ f $ is continuous. 
	
	Such questions have been analyzed thoroughly in recent years, primarily by Ciarlet and collaborators \cite{Cia03,CM19,CMM19b,CMM19}.
	Among these results, the closest to our setting is \cite[Theorem~4.2--4.3]{CMM19b}:\footnote{The statement below is not an exact quote of \cite[Theorem~4.2--4.3]{CMM19b}, but a variation that is more comparable with our main results.}
	\begin{quote}
		\emph{
			Let $ \omega $ be an open subset of $ \R^2 $, let $p>1$ and let $f\in C^1(\bar{\omega};\R^3)$ be an immersion.
			Let $\e>0$, and let $\tilde{f} \in \Imm_{2p}(\omega;\R^3)$, whose first and second fundamental forms are bounded almost everywhere by $\e^{-1}$.
			Then, there exists a constant $C=C(f,\e)$ such that, modulo a rigid transformation of $\tilde{f}$,
			\[
			\| \tilde{f} - f\|_{W^{1,p}} + \|\hn_{\tilde{f}} - \hn_f\|_{W^{1,p}} \le C\brk{\|f^*\euc-\tilde{f}^*\euc\|_{L^p} + \|\b_f-\b_{\tilde{f}}\|_{L^p}}.
			\]
		}
	\end{quote}
	Similar results in stronger topologies were proved in \cite{Cia03,CM19,CMM19b,CMM19}; these results are farther away from ours, as they involve estimates on the derivative of the induced metrics, which are not present in the elastic models that motivate this work. 
	
	The result mentioned above bears some similarities with \thmref{thm:main_euc}; we mention below some of the fundamental differences:
	
	\begin{enumerate}[]
		\item 
		\textbf{Compatibility of $ \g $ and $ \b $:}
		In \cite{CMM19b}, the reference forms $ \g $ and $ \b $ are \emph{assumed} to be compatible, whereas in \thmref{thm:main_euc} this is deduced.
		In particular, the question of the existence of a zero energy configuration for given forms of an elastic shell cannot be treated in the framework of \cite{CMM19b}.
		
		\item \textbf{Quantitative vs.\ qualitative estimates:}
		While our results only imply the convergence of configurations with asymptotically zero elastic energy, the results of \cite{CMM19b} are supplemented with a convergence rate;
		this is related to the previous point, as the key rigidity estimate in \cite{CMM19b} is the Friesecke-James-M\"uller geometric rigidity estimate, which is known only between Euclidean domains.
		Since we do not pre-assume the compatibility of $ \g $ and $ \b $, we have to rely on the Reshetnyak-like asymptotic-rigidity estimate \cite{KMS19}.
		
		\item
		\textbf{A priori assumed uniform estimates:}
		The results of \cite{CMM19b} assume pointwise bounds on the fundamental forms, whereas Theorem~\ref{thm:main_euc} does not; from the point of view of applications such a priori bounds are not typically available.
	\end{enumerate}
	
	We also note that although some aspects of the proofs in this paper are similar to their counterparts in the aforementioned literature (e.g., the proof in \cite{Cia03} also involves the thickening of the domain), the assumptions of \thmref{thm:main_euc} require different tools, mainly the use of the non-Euclidean versions of Reshetnyak's rigidity theorem, as mentioned above.

	\paragraph{\textbf{Structure of the paper}} 
	In \secref{sec:prelim}, we present some preliminary notations and results, in particular regarding the second fundamental form, Sobolev spaces between manifolds and Sobolev immersions.
	In \secref{sec:proof_main_const_curv}, we present a detailed proof of \thmref{THM:MAIN_CONST_CURV}.
	In \secref{sec:proof_main_euc}, we show how the proof of \thmref{THM:MAIN_CONST_CURV} can be adapted (and simplified) to proving \thmref{thm:main_euc}.
	In \secref{sec:Discussion}, we discus some aspects of the results which are better understood after reading the proofs. In particular, we explain (i) why in \thmref{THM:MAIN_CONST_CURV} we limit ourselves to a target manifold of constant sectional curvature; and (ii) why we took $ |df\circ(S-S_f)|_{\g,\h} $ as the bending term in our energy functional and why under physically-reasonable assumptions, it is equivalent to the more common bending term $ |\b - \b_f|_{\g,\g} $.
	We also discuss in Section~\ref{sec:Discussion} some open questions.

	\section{Preliminaries}\label{sec:prelim}
	
	\subsection{General notations}\label{sec:gen_not}
	
	\paragraph{Inner-product spaces} Consider two inner-product spaces $(V,\g)$ and $(W,\h)$; we use the same notations $\g$ and $\h$ to denote the inner-products induced on the dual spaces $V^*$ and $W^*$.
	The inner-products $\g$ and $\h$ induce an inner-product $\g \otimes \h$ on $\Hom(V,W)\simeq V^* \otimes W$,
	and we denote its corresponding norm by $|\,\cdot\,|_{\g,\h}$. 
	For $A\in \Hom(V,W)$ and $\calC\subset \Hom(V,W)$,
	\[
	\dist_{\g,\h}(A,\calC) = \inf_{B\in\calC} |A-B|_{\g,\h}.
	\]
	We denote by $\O(\g,\h)$ the set of linear isometries
	from $V$ to $W$, and when $ \dim V=\dim W$, by
	$\SO(\g,\h)$ the set of orientation-preserving linear isometries from $V$ to $W$. 
	When choosing positively-oriented orthonormal frames in $V$ and $W$, $\O(\g,\h)$ and $\SO(\g,\h)$ reduce to the sets of matrices $\O(\dim V, \dim W)$ and $\SO(\dim V)$, respectively.
	
	\paragraph{Levi-Civita connection}
	Let $(\M,\g)$ be a Riemannian manifold. 
	We denote by $\nabla^\g$ its Levi-Civita connection, or by $\nabla$ when there is no ambiguity.
	We denote by $\frakX(\M)$ the set of all smooth vector fields over $\M$.
	
	\paragraph{Pullbacks} 
	Let $E\to\N$ be a vector bundle over $\N$; we denote by $ \Gamma(E) $ the set of sections of $ E $. 
	If $f:\M\to\N$ is a smooth map between two manifolds, then we denote by $f^*E\to\M$ the pullback bundle over $\M$, where for $p\in\M$, the fiber $(f^*E)_p$ is identified with the fiber $E_{f(p)}$.
	
	Given a covariant $ k-$tensor field $ A $ on $ \N $, the pullback of $ A $ by $ f $, denoted by $ f^*A $, is defined by
	\[ 
	(f^*A)_p(v_1,...v_k)=A_{f(p)}(df_p(v_1),..,df_p(v_k)),
	\]
	for every $ p \in \M $ and $ v_1,...,v_k \in T_p\M $. In particular, if $ \h $ is a Riemannian metric on $ \N $ and $ f:(\M,\g) \to (\N,\h) $ is a smooth immersion then $ f^*\h $ is a Riemannian metric on $ \M $, called the pullback metric induced by $ f $. Note that $ f $ is an isometric immersion if and only if $ f^*\h=\g $. In a similar way, given a manifold $ S $ and a map $ Q:T\N \to TS $, we denote by $ f^*Q:TM \to TS $ the map defined by
	\[ 
	(f^*Q)_p(v)=Q_{f(p)}(df_p(v)),
	\]
	for every $ p \in \M $ and $ v\in T_p\M $.
	
	\paragraph {Inequality constants}
	Throughout the proof we denote by $C$ a positive constant whose value may vary from line to line, but which only depends on fixed quantities, such as the geometry of manifolds or the value of the exponent $p$ in $L^p$ spaces.
	
	\subsection{Second fundamental form}
	\label{subsec:Second fundamental form}
	
	Let $(\tM,\tg)$ be a Riemannian manifold and $(\M,\g)$ an embedded Riemannian submanifold of $\tM$. Let $ \II: \frakX(\M)\times \frakX(\M) \to \Gamma(N\M) $ be given by:
	\[ 
	\II(X,Y)=P_\perp ({\nabla}^{\tilde{\g}}_X Y),
	\]
	where $N\M \subset T\tM|_\M$ is the normal bundle of $\M$ in $\tM$, $P_\perp: T\tM|_\M \to N\M$ is the normal projection, and ${\nabla}^{\tilde{\g}}$ is the Levi-Civita connection of $(\tM,\tg)$.
	Given a normal vector field $ \hn \in \Gamma(N\M) $ we define $ \b_\hn:\frakX(\M)\times \frakX(\M) \to C^{\infty}(\M) $ by
	\[ 
	\b_\hn(X,Y)=( \II(X,Y),\hn)_{\tg}.
	\]
	$ \b_\hn $ is called the \textbf{second fundamental form of $\M$ along $ \hn $} (some authors refer rather to $ \II $ as the second fundamental form). It can be shown that $\II$ is symmetric and therefore so is $ \b_\hn$. Let $ S_\hn: T\M \to T\M $ be the self-adjoint linear operator associated with $ \b_\hn $, defined by
	\[ 
	\b_\hn(X,Y)=(S_\hn(X),Y)_{\g}=(X,S_\hn(Y))_{\g}.
	\]
	$ S_\hn $ is called the \textbf{shape operator along $ \hn $} (or sometimes the \textbf{Weingarten map}). By the Weingarten equation \cite[Prop.~8.4]{Lee18},
	\[ 
	S_\hn(X)=-\Ppar({\nabla}^{\tilde{\g}}_X \hn),
	\]
	where $ \Ppar: T\tM|_\M \to T\M $ is the tangential projection.
	If the co-dimension of $\M$ in $\N$ is one, then ${\nabla}^{\tilde{\g}}_X \hn$ is automatically in $T\M$ (since $\hn$ has norm one), hence $\Ppar$ can be omitted.

	\paragraph{Second fundamental form induced by a smooth immersion}
	
	Let $ (\M,\g) $ be an oriented $ d- $dimensional Riemannian manifold and let $ (\N,\h)$ be an oriented $(d+1)$-dimensional Riemannian manifold.   
	Given a smooth immersion $ f:(\M,\g) \to (\N,\h) $, it is locally an embedding. Therefore, for each $ p\in \M $ there exists a neighborhood $ p\in U \subset \M $ such that $ f(U)\subset \N $ is an embedded submanifold. We define $ \hn_f: \M \to T\N $
	to be the unit vector field normal to $ f $ such that for every embedded submanifold $ f(U)\subset \N $, $ \hn_f $
	induces the same orientation on $ f(U) $ as the one induced by $ f $ (a unit vector field normal to $ f $ is unique up to a sign, and by choosing the orientation it induces it is determined uniquely).
	
	The immersion $ f $ defines a second fundamental form $ \b_f:\frakX(\M)\times \frakX(\M) \to C^{\infty}(\M) $ and a shape operator $ S_f:T\M \to T\M $ by
	\beq\label{eq:B_f_S_f} 
	\begin{split}
		\b_f(X,Y)&=\b_{\hn_f}(df(X),df(Y))\\
		&=(S_{\hn_f}(df(X)),df(Y))_{\h}\\
		&=(df^{-1} S_{\hn_f}(df(X)),Y)_{f^*\h}\\
		&=(S_f(X),Y)_{f^*\h}.
	\end{split}      
	\eeq
	By the Weingarten equation we conclude that $ S_f(X)=-df^{-1}({\nabla}^{\tilde{\h}}_{df(X)} \hn_f)$, where we identify $ \hn_f $ as a function from $ f(U) $ to $ T\N $.
	We will use this equation to define the shape operator for Sobolev immersions as well, see \eqref{eq:shape operator} below.
	
	For the special case where $\dim \M=d$ and $(\N,\h)=(\Rd,\euc)$ (i.e., the range of $f$ is the Euclidean space) we obtain that
	\[ 
	S_f=-df^{-1}\circ d\hn_f,
	\]
	where $ d\hn_f $ is the differential of the Gauss map.
	For further information regarding the second fundamental form and shape operator, see \cite{Lee18,Doc92}.

	
	\subsection{The double tangent $TT\N$ and the connector operator}
	\label{sec:TTN}
	
	For an immersion $f:\M\to \N$, the normal $\hn_f$ is a map $\M\to T\N$.
	In addition to its covariant derivative $\nabla \hn_f: T\M \to T\N$ discussed above, we also have its differential $d\hn_f : T\M \to TT\N$.
	This differential is the object naturally encountered when considering $\hn_f$ as a Sobolev function.
	In this subsection we summarize some known constructions regarding the double tangent space $TT\N$ that are needed in the analysis of Sobolev immersions.
	
	\paragraph{Covariant derivative defined by a connector}
	
	There is one-to-one correspondence between affine connections $\nabla$ on $T\N$ and vector bundle homomorphisms $K:TT\N\to T\N$, called the \emph{connector operator}; see \cite[Chapter~IV]{Michor2008}.
	For a manifold $ \M $, a smooth mapping $ \hn:\M \to T\N $, and a vector field $ X\in\frakX(\M) $ the \emph{covariant derivative of $ \hn $ along $ X $} can then be defined by
	\beq
	\label{eq:connector cd}
	\nabla_{X}\hn:=K\circ d\hn \circ X: \M \to T\M \to TT\N \to T\N.
	\eeq

	\paragraph{The Sasaki metric on $ T\N $}
	
	The connector (or affine connection) induces on $ TT\N $ a canonical decomposition $ TT\N=V(\N)\oplus H(\M) $, where $ V(\N)=\ker d\pi $ (where $\pi: T\N \to \N  $ is the projection), and $ H(M)=\ker K $. At every point $ (p,v)\in T\N $ the map
	\[ 
	(d\pi_{(p,v)}\times K_{(p,v)}): T_{(p,v)}T\N \to T_p\N \times T_p\N
	\]
	is an isomorphism. 
	
	\noindent The Sasaki metric \cite{sasaki1958}, $ \mathcal{S}_{\h,K} $ on $ T\N $ is the pullback of the metric $ \h $ on $ \N $ by the map $ (d\pi \times K) $:
	for every $ V,W\in TT\N $
	\beq
	\label{eq:Sasaki}
	(V,W)_{\mathcal{S}_{\h,K}}=(d\pi(V),d\pi(W))_{\h}+(K(V),K(W))_{\h}.
	\eeq
	See \cite[Chapter~3, Ex.~2]{Doc92} for a more explicit definition.

	\subsection{Sobolev spaces between manifolds}
	\label{subsec:sobolev}
	
	The following definitions and results are well-known; see \cite{Haj09} and \cite[Appendix~B]{Weh04} for proofs and further references.
	
	\noindent Let $\M, \mathcal{Q}$ be compact Riemannian manifolds, and let $D\in\bbN$ be large enough such that there exists an isometric embedding $\iota:\mathcal{Q}\to \R^D$ (Nash's theorem).
	For $p\in[1,\infty)$, we define the Sobolev space $W^{1,p}(\M;\mathcal{Q})$ by
	\[
	W^{1,p}(\M;\mathcal{Q}) := \BRK{u:\M\to \mathcal{Q}\,:\,\iota\circ u\in W^{1,p}(\M;\R^D)}. 
	\]
	This space inherits the strong and weak topologies of $W^{1,p}(\M;\R^D)$, which are independent of the embedding $\iota$.
	Although $du$ is only a weak derivative, it still holds that for almost every $ q\in\M $, $ du_q $ is a linear map from $ T_q\M $ to $ T_{u(q)}\mathcal{Q} $ \cite{CVS16}. Furthermore, $ du $ satisfies the chain rule, i.e., for every embedding $ \iota:Q \to \R^D $, $ d(\iota\circ u)=d\iota \circ du $ \cite[Prop.~1.9]{CVS16}. 
	
	\noindent It is worth mentioning that for every two Riemannian manifolds $ \M $ and $ \mathcal{Q} $ (not necessarily compact) $ W^{1,p}(\M;\mathcal{Q}) $ can be defined intrinsically, without the use of an embedding into Euclidean space. 
	When $ \M $ and $ \mathcal{Q} $ are compact these two definitions agree \cite{CVS16}.
	
	Note that $ W^{1,p}(\M;\mathcal{Q}) $ is generally not a vector space and therefore the notion of a norm does not exist. 
	However, the embedding $\iota$ induces a natural metric on $W^{1,p}(\M;\mathcal{Q})$ by
	\[ 
	d_{W^{1,p}(\M;\mathcal{Q})}(F,\tF):=\Norm{\iota \circ F - \iota \circ \tF}_{W^{1,p}(\M;\R^D)}.
	\]
	The metric itself depends on $ \iota $ but the topology it induces does not.
	
	\subsection{The space of Sobolev immersions}
	\label{subsec:sobolev_immersions}

	Let $ f\in W^{1,p}(\M;\N) $ be an immersion of a $d$-dimensional manifold $\M$ into a $(d+1)$-dimensional manifold $\N$. 
	Then for a.e.~$ p\in\M $, $ df_p:T_p\M \to T_{f(p)}\N $ is an injective linear map. Since $ \M $ is an oriented manifold, $ df_p $ induces an orientation on the codimension-$ 1 $ subspace $ df_p(T_p\M)\subset T_{f(p)}\N $. Since $ \N $ is an oriented manifold, there exists a unique unit vector $ \nu_f(p)\in T_{f(p)}\N  $ that is normal to $ df_p(T_p\M) $ and induces the same orientation on $ df_p(T_p\M)\subset T_{f(p)}\N $. We define $ \hn_f:\M \to T\N $, the normal vector field to $ f $, by $ \hn_f(p)=(f(p),\nu_{f}(p)) $. We usually identify $ \hn_f $ and $ \nu_f $, emphasizing the difference only when relevant.

	We endow $ TT\N $ with the Sasaki metric $ \mathcal{S}_{\h,K}$ induced by $ \h $ and the Levi-Civita connection. We define
	\[ 
	\Imm_p(\M;\N):=\BRK{f\in W^{1,p}(\M;\N) ~:~ \rank \, {df}= d \,\, \text{a.e.}, \; \hn_f \in W^{1,p}(\M;T\N)},
	\]
	with respect to the metrics $ \g,\h $ and $ \mathcal{S}_{\h,K} $ on $ \M,\N $ and $ TT\N $ respectively.
	Since $ \hn_f \in S\N \subset T\N $ (where $ S\N $ is the sphere bundle which is compact) then both definitions of $ \hn_f \in W^{1,p}(\M;T\N) $, the intrinsic and by embedding into Euclidean space, agree.
	
	We define the weak covariant derivative of $ \hn_f $ by \eqref{eq:connector cd}, i.e.,
	\[ 
	\nabla_{X}\hn:=K\circ d\hn_{f} \circ X: \M \to T\N,
	\]
	for every $ X\in\frakX(\M) $.
	By the definition of the Sasaki metric \eqref{eq:Sasaki}, $ \nabla_{X}\hn\in L^p(\M;T\N) $.
	Using the Weingarten equation we define the shape operator $ S_f:T\M \to T\M $ by
	\beq
	\label{eq:shape operator}
	S_f(X)=-df^{-1}\circ \nabla_{X}\hn_{f}.
	\eeq
	Thus, the energy \eqref{eq:E_pMN} is well-defined and finite for any $f\in \Imm_p(\M;\N)$.
	
	\section{Proof of \thmref{THM:MAIN_CONST_CURV}}
	\label{sec:proof_main_const_curv}
	
	We divide the proof into six steps:
	\begin{description}
		\item{\emph{Step~I}:} We thicken $\M$ into $ \tM=\M \times [-\e,\e] $ for an appropriate $\e>0$ which will be fixed later on;
		we define an extension operator taking an immersion $ f:\M \to \N $, extending it along normal geodesics into a map $F:\tM \to \N$. We calculate the differential $dF$ of the extension.
		
		\item{\emph{Step~II}:}  We endow $\tM$ with a metric $G$, which in particular satisfies that $(\M,\g)$ is isometric to $ (\M\times\{0\},G|_{\M\times\{0\}})$, and $ \b $ is the second fundamental form of $\M$ in $\tM$.
		
		\item{\emph{Step~III}:} We prove that $ \dist_{G,\h}(dF_n,SO(G,\h)) \to 0  $ in $ L^p(\tM) $. Then, using {\cite[Thm.~3]{KMS19}} we obtain that there exists a local isometry $ F:(\tM,G) \to (\N,\h) $ such that $ F_n \to F $ in $ W^{1,p}(\tM;\N) $.
		
		\item{\emph{Step~IV}:} We show that $f:=F|_{\M \times \{0\}}$ is an isometric immersion of $\M$ in $\N$, such that $\b$ is its second fundamental form. Moreover, $F$ is an extension of $f$.
		
		\item{\emph{Step~V}:} We prove that $ f_n \to f $ in $ W^{1,p}(\M;\N) $.
		
		\item {\emph {Step~VI}:} We prove that $ \hn_{f_n} \to \hn_{f} $ in $ W^{1,p}(\M;T\N) $ . 
		
	\end{description}
	
	In the rest of this section we elaborate on each of these steps.
	
	\bigskip
	\noindent\emph{Step~I: Extending an immersion $f:M\to \N$ to $\tM =\M \times [-\e,\e]$ and calculating the differential of the extension.}
	\bigskip
	
	We think of $\tM$ as a ``thin" sheet whose mid-surface is $\M$. We do not take the limit $\e\to0$ as our proof hinges on a rigidity theorem in codimension $0$.
	For later use we define the projection
	\beq\label{eq:pi}
	\pi: \tM \to \M \qquad \pi(p,t)=p. 
	\eeq
	
	Given an immersion $f:\M \to \N$, we extend it to $F:\tM \to \N$ by
	\beq
	\label{eq:F_def}
	F(p,t)=\exp_{f(p)}(t\hn_f(p)),
	\eeq
	where  $\hn_f $  is the unit normal vector field to $f$ as defined in \secref{subsec:sobolev_immersions}.
	Since $ \N $ is a compact Riemannian manifold, by the Hopf-Rinow theorem it is geodesically complete and thus $F$ is well-defined.
	The extension is constructed such that for every $p\in\M$, the curve 
	\[
	\gamma^f_p : [-\e,\e]\to\N
	\]
	defined by $\gamma^f_p(t) = F(p,t)$ is a geodesic emanating from $f(p)$ with initial velocity $\hn_f(p)$.
	
	Throughout the proof we denote by $ u_1,u_2:[-\e,\e] \to \R $ the functions
	\beq\label{eq:u_1_u_2}
	u_1(t) =
	\begin{cases}
		\cos(\sqrt{\kappa}t) & \kappa> 0\\
		1 & \kappa=0\\
		\cosh(\sqrt{-\kappa}t) & \kappa< 0\\
	\end{cases}
	\qquad\text{and}\qquad
	u_2(t) =
	\begin{cases}
		\sin(\sqrt{\kappa}t) & \kappa> 0\\
		t & \kappa=0\\
		\sinh(\sqrt{-\kappa}t) & \kappa< 0\\
	\end{cases}
	\eeq
	where $\kappa$ is the sectional curvature of $(\N,\h)$.
	
	\begin{proposition}
		\label{dF}
		Let $ (p,t)\in \tM $ and $ (v,w)\in T_{(p,t)}\tM \simeq T_p\M \times \R $.
		Then
		\begin{equation}
		\label{eq:dF}
		dF_{(p,t)}(v,w)=P_{\gamma^f_p,t}\left[u_1(t)\, df_{p}(v)-u_2(t)\, df_{p}\circ S_f(v)+w\, \hn_f(p)\right],
		\end{equation}
		where $ P_{\gamma^f_p,t} $ is the parallel transport in $\N$ between time $0$ and $t$ along the geodesic $ \gamma^f_p$.
	\end{proposition}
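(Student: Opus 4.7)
By linearity, we can treat the two components of $(v,w)\in T_p\M\oplus\R$ separately. The $\R$-component is immediate: by construction, $t\mapsto F(p,t)$ is the geodesic $\gamma^f_p$, so $\partial_t F(p,t)=(\gamma^f_p)'(t)=P_{\gamma^f_p,t}[\hn_f(p)]$ (the velocity of a geodesic is the parallel transport of its initial velocity), which yields the third summand of \eqref{eq:dF}.

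For the $T_p\M$-component I would run a Jacobi-field argument. Fix a smooth curve $c:(-\delta,\delta)\to\M$ with $c(0)=p$ and $c'(0)=v$, and form the two-parameter variation
\[
\Phi(s,t)=F(c(s),t)=\exp_{f(c(s))}\bigl(t\,\hn_f(c(s))\bigr).
\]
For each fixed $s$ the curve $t\mapsto\Phi(s,t)$ is a geodesic, hence $J(t):=\partial_s\Phi(s,t)\big|_{s=0}=dF_{(p,t)}(v,0)$ is a Jacobi field along $\gamma^f_p$. Its initial data follow directly: $J(0)=df_p(v)$, and applying the symmetry $\nabla_t\partial_s\Phi=\nabla_s\partial_t\Phi$ together with $\partial_t\Phi(s,0)=\hn_f(c(s))$ and the Weingarten-type identity $\nabla_v\hn_f=-df_p\circ S_f(v)$ coming from \eqref{eq:shape operator} gives $J'(0)=-df_p\circ S_f(v)$.

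Finally I would exploit that $(\N,\h)$ has constant sectional curvature $\kappa$. Both $J(0)$ and $J'(0)$ lie in $df_p(T_p\M)$ and are therefore perpendicular to $\hn_f(p)=(\gamma^f_p)'(0)$; in constant curvature this orthogonality is preserved by the Jacobi flow. Writing $J(t)=P_{\gamma^f_p,t}[X(t)]$ in the parallel-transported frame reduces the covariant Jacobi equation $J''+\kappa J=0$ to the scalar linear ODE $X''+\kappa X=0$, whose two fundamental solutions are (up to normalization) the functions $u_1$ and $u_2$ of \eqref{eq:u_1_u_2}. Matching the initial data $X(0)=df_p(v)$, $X'(0)=-df_p\circ S_f(v)$ and adding the $w$-contribution yields \eqref{eq:dF}. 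The step I would check most carefully is the computation of $J'(0)$ — the correct symmetry-of-connection move combined with the sign convention for $S_f$ — together with the associated normalization $u_1(0)=1,\ u_1'(0)=0,\ u_2(0)=0,\ u_2'(0)=1$ required to reproduce these initial conditions; everything else is routine.
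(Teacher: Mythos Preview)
Your argument for the smooth case is essentially identical to the paper's: the same splitting into the $\partial_t$-direction (geodesic velocity is parallel) and the tangential direction, the same Jacobi-field variation $\Phi(s,t)=F(c(s),t)$, the same computation of $J(0)=df_p(v)$ and $J'(0)=-df_p\circ S_f(v)$ via the symmetry lemma and the Weingarten identity, and the same reduction of the Jacobi equation $J''+\kappa J=0$ in constant curvature by writing $J$ in a parallel frame.

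The one genuine gap is regularity. The proposition is stated (and later applied) for $f\in\Imm_p(\M;\N)$, not just smooth $f$. Your variation argument implicitly assumes that $s\mapsto f(c(s))$ and $s\mapsto\hn_f(c(s))$ are differentiable enough to run the symmetry-of-the-connection step, which is not available for a Sobolev immersion. The paper handles this separately: it writes $F=\exp\circ\chi$ with $\chi(p,t)=(f(p),t\nu_f(p))$, uses composition/chain-rule results for Sobolev maps between manifolds to conclude $F\in W^{1,p}(\tM;\N)$ with $dF_{(p,t)}=d\exp_{\chi(p,t)}\circ d\chi_{(p,t)}$, and then observes that $dF_{(p,t)}$ depends only on the pointwise data $\hn_f(p)$ and $(d\hn_f)_p$. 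At almost every $p$ one may therefore compare with a \emph{smooth} immersion $\tilde f$ sharing these first-order data, to which the Jacobi-field computation applies verbatim. You should add this extension step; without it the proof is incomplete in the setting where the formula is actually used.
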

	
	\begin{proof}
		First, we assume that $ f $ is a smooth immersion. We start with 
		\[
		dF_{(p,t)}(0,1) = \dot{\gamma}^f_p(t)=P_{\gamma^f_p,t} (\dot{\gamma}^f_p(0))=P_{\gamma^f_p,t} (\hn_f),
		\]
		where we used that a geodesic is a curve whose velocity  is parallel along itself.
		
		Let $ v\in T_p\M $, and let $ p(s) $ be a curve in $\M$ such that $ p(0)=p $ and $ \dot{p}(0)=v $. We define a family of geodesics $ \Gamma(s,t)=F(p(s),t)= \gamma^f_{p(s)}(t)$. Since $ \Gamma(s,t) $ is a family of geodesics, the vector field $ dF_{(p,t)}(v,0)=\left.\pd{}{s}\right|_{s=0}\Gamma(s,t) $ (as a function of $t$) is a Jacobi field $ J(t) $.
		We note that
		\[ 
		J(0)=\left.\pd{}{s}\right|_{s=0}\Gamma(s,0)=dF_{(p,0)}(v,0)=df_{p}(v).
		\]
		From the symmetry lemma for variation fields \cite[Lemma 6.2]{Lee18} we obtain that
		\[ 
		\begin{split}
		\dot{J}(0)&=\left.D_t\right|_{t=0}\left.\pd{}{s}\right|_{s=0}\Gamma(s,t)\\
		&=\left.D_s\right|_{s=0}\left.\pd{}{t}\right|_{t=0}\Gamma(s,t)\\
		&=\left.D_s\right|_{s=0}\hn_f(p(s))\\
		&=\nabla^{\h}_{df_p(v)}\hn_f\\
		&=-df_p \circ S_f(v),\\ 
		\end{split}
		\]
		where $ S_f $ is the shape operator associated with $f$ as defined in \eqref{eq:B_f_S_f}.
		We conclude that for every $ (p,t)\in\tM  $ and  $ v\in T_p\M $,
		\[ 
		dF_{(p,t)}(v,0)=J(t),
		\] 
		where $ J(t) $ is the Jacobi field along $ \gamma^f_p $ with initial conditions $ J(0)=df_{p}(v) $ and $ \dot{J}(0)=-df_p \circ S_f(v) $.
		
		Note that $ J $ is a normal Jacobi field. Indeed, 
		both $J(0)$ and $\dot{J}(0)$ are in the image of $df_p$, hence  $J(t)\perp \dot{\gamma}^f_p(t)$ for all $t$ \cite[Prop.~10.7]{Lee18}.
		
		\noindent In a Riemannian manifold of constant sectional curvature $\kappa$, for normal Jacobi fields along a unit-speed geodesic, the Jacobi equation takes the form \cite[Prop.~10.12]{Lee18}:
		\[ 
		\frac{D^2}{\partial t^2} J(t)=-\kappa J(t).
		\]
		A straightforward calculation gives the solution of this equation with the above-mentioned initial conditions,
		\[ 
		dF_{(p,t)}(v,0) = J(t)=P_{\gamma^f_p,t}\Brk{u_1(t) \, df_{p}(v)-u_2(t) \, df_{p}\circ S_f(v)},
		\]
		which completes the proof when $ f $ is smooth.
		
		We now prove that the same holds for $ f\in\Imm_p(\M;\N)$. Recall that $ \hn_f:\M \to T\N  $ is defined by $ \hn_f(p)=(f(p),\nu_f(p)) $ (in the first part of the proof we identified $ \hn_f $ with $ \nu_f $).  
		The map $F:\tM \to \N$ is a composition of the exponential map $\exp : T\N \to \N$ ($ \N $ is complete so $ \exp  $ is defined on all of $ T\N $) with the map $\chi:\tM \to T\N$ defined by $\chi(p,t) := (f(p),t \nu_f(p))$;
		Since $\hn_f\in W^{1,p}(\M;T\N)$, it follows that $\chi\in W^{1,p}(\tM;T\N)$.
		Since $t\in [-\e,\e]$ and $\nu_f(p)$ is a unit vector, the image of $ \chi $ lies in a compact subset $K$ of $T\N$.
		Since $\exp : T\N \to \N$ is smooth, it is Lipschitz when restricted to $K$, hence, it follows that the composition map $F=\exp\circ \chi$ is in $W^{1,p}(\tM;\N)$ \cite[Prop.~2.6]{CVS16}.
		Moreover, the chain rule holds \cite[Prop.~1.8]{CVS16}:
		\[
		dF_{(p,t)} = d\exp_{(f(p),t\hn_f(p))} \circ d\chi_{(p,t)}.
		\]
		Therefore,  $dF$ is well-defined when $ f\in\Imm_p(\M;\N)$. In addition, $ d\chi_{(p,t)} $ is uniquely determined by $ \hn_f(p) $ and $ (d\hn_f)_p $, and by the chain rule so does $ dF_{(p,t)} $.

		Let $ \tf:\M \to \N $ be a smooth immersion such that $ \tf(p)=f(p) $, $ d\tf_p=df_p $ (which implies that $ \hn_{\tf}(p)=\hn_{f}(p) $) and $ (d\hn_{\tf})_p=(d\hn_f)_p $. Let $ \tF $ be its extension. 
		On the one hand, $ d\tF_{(p,t)}=dF_{(p,t)} $; on the other hand, since $ \tf $ is smooth $ d\tF_{(p,t)} $ is given by $ \eqref{eq:dF} $.  We thus conclude that for every $ f\in\Imm_p(\M;\N) $, $ dF $ is well-defined and \eqref{eq:dF} holds, which completes the proof.
		\end{proof}
	
	\bigskip
	\noindent\emph{Step~II: Endowing $\tM$ with a metric $G$, which in particular satisfies that $(\M,\g)$ is isometric to $ (\M\times\{0\},G|_{\M\times\{0\}})$, and $ \b $ is the second fundamental form of $\M$ in $\tM$. }
	\bigskip
	
	The requirements that $G|_{T\M}=\g $ and $ \b_\M=\b  $ do not determine $G$ uniquely.
	We define $G$ to be the unique metric such that, \emph{assuming} there exists an isometric immersion $ f:(\M,\g) \to (\N,\h) $ with second fundamental form $ \b $, the extension $F:\tM\to\N$ of $f$ would be an isometric immersion.
	That is, $ G=F^*\h $, and from
	\eqref{eq:dF} we obtain that
	\begin{equation}
	\label{eq:G}
	G=u_1^2(t) \, \pi^*\g -2u_1(t)u_2(t) \, \pi^*\b +u_2^2(t) \, \pi^*S^*\g +dt^2,
	\end{equation}
	where $t\in [-\e,\e]$ is the normal coordinate and $S^*\g(u,v) =(S(u),S(v))_{\g}$.
	In a coordinate system $(p,t) = (p_1,\ldots,p_d,t)$ of $\tM = \M\times [-\e,\e]$, 
	$ G $ can be represented by the following block matrix
	\[ 
	G_{(p,t)}= \brk{ 
		\begin{array}{c|c} 
		u_1^2(t)\g -2u_1(t)u_2(t)\b + u_2^2(t) S^T\g S & 0 \\ 
		\hline 
		0 & 1 
		\end{array}},
	\] 
	where $\g$, $\b$ and $S$ in the upper left block are the $d\times d$ matrix representations of these tensors with respect to the coordinates $(p_1,\ldots,p_d)$.
	Clearly $G$ is symmetric, from the definition of $u_1(t)$, $u_2(t)$ and the compactness of $ \M $, it follows that by choosing $ \e $ small enough it is also positive definite and therefore a metric. 
	Note that $G$ only depends on $\g$ and $ \b $ and not on $f$ itself.
	That is, while the construction of $G$ is ``backward engineered" from the sought result, the resulting metric only depends on the given data. The fact that $ G $ does not depend on $ f $ itself holds because of the assumption that $(\N,\h)$ is of constant curvature (see \secref{sec:Discussion} for more details).
	
	Clearly, the metric induced by $G$ on the submanifold $\M \times \{0\}$ is $\g$, and thus $\M$ is isometric to $ \M \times \{0\}  $. In addition, since $b_\M$ is determined by the derivative of $ G $ along the normal direction at $ t=0 $, i.e., $ \pd{}{t}G|_{\M \times \{0\}}$, we obtain that $ \b_\M=\b $ (for a detailed proof see Appendix~\ref{sec:appA}).
	
	\bigskip
	\noindent\emph{Step~III: The extensions $F_n$ of $f_n$ satisfy $\dist_{G,\h}^p(dF_n,SO(G,\h)) \to 0 $ in $L^p(\tM)$. Therefore, there exists a local isometry $F:(\tM,G) \to (\N,\h)$ with $F_n \to F$ in $W^{1,p}(\tM;\N)$.  }
	\bigskip
	
	For an immersion $f:\M\to\N$, we denote by $O(df):T\M \to T\N$  the map such that for every $ p $, $ O(df)_p:T_p\M \to T_{f(p)}\N $ is the orthogonal map closest to $ df_p $ with respect to the norm induced by $ \g $ and $ \h $.
	It is well-known (for a detailed proof see \cite{Kah11}) that pointwise $O(df)$ is the orthogonal part in the polar decomposition of $df$, that is,
	\beq
	O(df)=df(df^{T}df)^{-1/2},
	\label{eq:O(df)}
	\eeq
	where the transpose is with respect to the inner-products $\g$ and $\h$. 
	In addition, since $ f $ is an immersion, then $ O(df) $ is unique.
	
	\begin{proposition}\label{A_f in SO}
		For $ f\in\Imm_p(\M;\N) $, define a section $A_f$ of $T^*\tM\otimes f^*T\N$ as follows:
		for $ (p,t)\in \tM $ and $ (v,w)\in T_{(p,t)}\tM \simeq T_p\M \times \R $,
		\beq
		\label{eq:A_f}
		A_f(v,w) := P_{\gamma^f_p,t}\left[u_1(t)\, O(df_p)(v)-u_2(t)\, O(df_p)\circ S(v)+w\, \hn_f(p)\right].
		\eeq
		Then, $A_f$ is a section of $\SO(G,\h)$, i.e., $A_f$ is an orientation-preserving isometry at almost every point.
	\end{proposition}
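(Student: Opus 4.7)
The proof will proceed by a direct pointwise verification at almost every $(p,t)\in \tM$ where $df_p$ has full rank (which is a full-measure subset by the definition of $\Imm_p(\M;\N)$). Since $P_{\gamma^f_p,t}$ is an orientation-preserving linear isometry from $T_{f(p)}\N$ onto $T_{F(p,t)}\N$, it suffices to show that the map $B : T_p\M \oplus \R \to T_{f(p)}\N$ defined by
\[
B(v,w) = u_1(t)\, O(df_p)(v) - u_2(t)\, O(df_p)(Sv) + w\, \hn_f(p)
\]
is an orientation-preserving $(G_{(p,t)}, \h_{f(p)})$-isometry.

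For the isometry property, I would use two facts: $O(df_p)$ is a linear isometry from $(T_p\M, \g_p)$ onto $(df_p(T_p\M), \h_{f(p)})$ (it is the orthogonal factor in the polar decomposition \eqref{eq:O(df)}), and $\hn_f(p)$ is a unit vector $\h$-orthogonal to $df_p(T_p\M)$. Expanding $\h(B(v_1,w_1), B(v_2,w_2))$ bilinearly, all mixed terms pairing $\hn_f(p)$ with an element of $\mathrm{Im}\,O(df_p)=df_p(T_p\M)$ vanish, and one is left with
\[
u_1^2\,\g(v_1,v_2) - u_1 u_2\, \g(v_1, Sv_2) - u_1 u_2\, \g(Sv_1, v_2) + u_2^2\, \g(Sv_1, Sv_2) + w_1 w_2 .
\]
Using self-adjointness of $S$ together with $\b(u,v) = \g(Su,v)$ and $(S^*\g)(u,v) = \g(Su,Sv)$, this coincides exactly with $G_{(p,t)}((v_1,w_1),(v_2,w_2))$ as displayed in \eqref{eq:G}; this is precisely the point of the reverse-engineered definition of $G$ in Step~II.

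For orientation, I would factor $B(v,w) = O(df_p)\bigl((u_1(t) I - u_2(t) S) v\bigr) + w\,\hn_f(p)$. The endomorphism $u_1(t) I - u_2(t) S$ of $T_p\M$ reduces to the identity at $t=0$ and depends continuously on $t$; by compactness of $\M$ and smoothness of $S$, possibly shrinking $\e$ from the choice made in Step~II guarantees that its determinant stays strictly positive on all of $\tM$. The map $O(df_p)$ is orientation-preserving from $T_p\M$ onto $df_p(T_p\M)$, since it differs from $df_p$ by the positive-definite operator $(df_p^T df_p)^{-1/2}$. Finally, by the construction in \secref{subsec:sobolev_immersions}, $\hn_f(p)$ was chosen precisely so that an oriented basis of $df_p(T_p\M)$ followed by $\hn_f(p)$ is a positively oriented basis of $T_{f(p)}\N$. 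Combining these three orientation-preserving factors with the orientation on $T_{(p,t)}\tM = T_p\M\oplus \R$ yields that $B$, and hence $A_f$, preserves orientation a.e.

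The computation itself is essentially forced by how $G$ was constructed, so the only real subtlety is the orientation bookkeeping and the need to handle Sobolev regularity by restricting to the full-measure set where $df$ has rank $d$; no analytical obstacles arise beyond the already-established facts about $O(df)$ and $\hn_f$ for Sobolev immersions.
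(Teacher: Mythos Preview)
Your proof is correct and follows essentially the same approach as the paper: the isometry verification is the same bilinear expansion carried out in Appendix~\ref{sec:appB}, and your orientation argument---tracking the sign of $\det(u_1(t)I-u_2(t)S)$ via continuity from $t=0$---is just a direct rephrasing of the paper's observation that $A_f$ is a continuous (in $t$) path of isometries starting in $\SO(G,\h)$ at $t=0$, hence stays there. One small remark: you need not shrink $\e$ further, since the positive-definiteness of $G$ already forces $u_1(t)I-u_2(t)S$ to be invertible (the upper-left block of $G$ equals $(u_1I-u_2S)^T\g(u_1I-u_2S)$).
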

	
	\begin{proof}
		The section $ A_f $ has the same structure as $dF$, with $ df $ and $ S_f $ replaced by $ O(df) $ and $ S $ respectively. We begin by proving that $ A_f\in\Gamma(O(G,\h)) $, i.e., that $ A_f $ is an isometry at every point. 
		A straightforward calculation shows that  $ A_f $ is indeed an isometry, i.e., $ (A_f(v,w),A_f(\xi,\eta))_{\h}=((v,w),(\xi,\eta))_G $ (for a detailed proof see Appendix~\ref{sec:appB}).
		For a fixed $ p $, $ A_f $ is smooth as a function of $ t $, in other words, it is smooth along the fiber $ {p}\times [-\e,\e] $. Since  $ A_f $ is a smooth isometry along each fiber and the fiber is connected, in order to prove that $ A_f\in \SO(G,\h) $ almost everywhere, it suffices to find one point on each fiber $ {p}\times [-\e,\e] $ at which $ A_f $ is orientation-preserving.  
		Consider a point $ (p,0)\in \M \times \{0\} \subset \tM  $. For $ (v,w)\in T_{(p,0)}\tM \simeq T_p\M \times \R $ we obtain
		\[ 
		(A_f)_{(p,0)}(v,w)=O(df_p)(v)+w\, \hn_f(p),
		\]
		which is orientation preserving by the definition of $\hn_f$ (see \secref{subsec:sobolev_immersions}), since by \eqref{eq:O(df)}, $ O(df) $ induces the same orientation as $ df $ on the co-dimension 1 subspace perpendicular to $\hn_f$.
		\end{proof}
	
	We next introduce a second metric $G'$ on $\tM$ having a product structure, 
	\[
	G' = \pi^*\g + dt^2,
	\]
	which is an approximation to $G$ for which we can apply Fubini's theorem.
	From the product structure of $G'$ we obtain that for every map $ Q:T\M \to T\N $,
	\beq
	\label{eq:G to g norm}
	{|\pi^* Q|}_{G',\h}={|Q|}_{\g,\h}\circ\pi.
	\eeq
	
	From the compactness of $\tM$ we obtain the following bounds:
	
	\begin{lemma}
		\label{norm and volume equiv}
		There exist constants $c_1,c_2>0$, such that for every linear map $A: T\tM\to T\N$,
		\[
		c_1 |A|_{G,\h} \le |A|_{G',\h} \le c_2 |A|_{G,\h},
		\]
		and there exist constants $c_3,c_4>0$, such that
		\[
		c_3 \le \frac{\Vol_G}{\Vol_{G'}} \le c_4.
		\]
	\end{lemma}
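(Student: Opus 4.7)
The plan is to observe that both $G$ and $G'$ are smooth Riemannian metrics on the compact manifold $\tM = \M \times [-\e,\e]$, so standard compactness arguments will give the equivalences claimed. All constants should come out depending only on $\g$, $\b$, $\kappa$ and $\e$, which is what the paper needs.

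First, I would do the pointwise comparison of the two inner products $G_{(p,t)}$ and $G'_{(p,t)}$ on each fiber $T_{(p,t)}\tM$. Using the explicit block-matrix form of $G$ from \eqref{eq:G}, the normal direction $\partial_t$ is a unit vector for both $G$ and $G'$ and is $G$-orthogonal and $G'$-orthogonal to $\pi^* T\M$, so it suffices to compare $G$ and $G'$ restricted to the horizontal $d$-dimensional subspace. On this subspace $G' = \pi^*\g$ while $G$ is given by the matrix $u_1^2(t)\g - 2u_1(t)u_2(t)\b + u_2^2(t)\,S^T\g S$. Since $u_1(0)=1$, $u_2(0)=0$, and $\tM$ is compact, for $\e$ small enough the map $G'^{-1}G$ has all eigenvalues in a compact interval $[\lambda_-,\lambda_+] \subset (0,\infty)$; equivalently there exist $c,C>0$ with $c\,G'(v,v) \le G(v,v) \le C\,G'(v,v)$ for every $(p,t)\in\tM$ and every $v\in T_{(p,t)}\tM$.

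Next I would transfer this pointwise equivalence of inner products on $T\tM$ to an equivalence of the induced norms on $\Hom(T\tM, T\N)$. For any linear map $A: T_{(p,t)}\tM \to T_{F(p,t)}\N$, choose a basis that is simultaneously $G'$-orthonormal and $G$-orthogonal on $T_{(p,t)}\tM$ (possible since both are inner products). The $(G',\h)$-norm squared of $A$ is the sum of $\h$-norms squared of $A$ applied to these vectors, while the $(G,\h)$-norm squared is the same sum rescaled by the reciprocal $G$-lengths squared of the basis vectors. These rescaling factors lie in $[1/\lambda_+, 1/\lambda_-]$ by the previous step, so one gets $c_1 |A|_{G,\h} \le |A|_{G',\h} \le c_2 |A|_{G,\h}$ with $c_1,c_2$ independent of $A$ and of $(p,t)$.

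For the volume-form comparison, I would use $\Vol_G = \sqrt{\det(G'^{-1}G)}\,\Vol_{G'}$, where the determinant is taken in any coordinate frame. This density is the product of eigenvalues of $G'^{-1}G$, each bounded in $[\lambda_-,\lambda_+]$ by the previous analysis, so it is pinched between positive constants $c_3, c_4$ uniformly on $\tM$. I do not expect any real obstacle here: the entire lemma is a compactness-plus-continuity statement, and the only thing to watch is that $\e$ is chosen small enough so that $G$ is positive definite, which has already been arranged in Step~II.
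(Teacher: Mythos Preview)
Your proposal is correct and follows exactly the approach the paper has in mind: the paper does not give a proof of this lemma, merely prefacing it with ``From the compactness of $\tM$ we obtain the following bounds,'' and your compactness-plus-continuity argument is precisely the intended justification.
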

	
	\begin{proposition}
		\label{SO dist bound}
		Let $ f:\M \to \N $ be a smooth immersion and let $ F:\tM \to \N $ be its extension to $\tM$ as defined by \eqref{eq:F_def}. Then, 
		\[
		\int_{\tM} \dist_{G,\h}^p(dF,\SO(G,\h))\,\Vol_G \leq C\, \E_p(f).
		\]
	\end{proposition}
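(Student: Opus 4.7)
The plan is to use $A_f$ as a competitor in the pointwise infimum defining the distance to $\SO(G,\h)$. Since Proposition~\ref{A_f in SO} (the $A_f\in\SO$ proposition) gives $A_f\in\Gamma(\SO(G,\h))$ a.e., we have
\[
\dist_{G,\h}(dF_{(p,t)},\SO(G,\h))\le |dF_{(p,t)}-A_{f,(p,t)}|_{G,\h},
\]
so it suffices to estimate $\int_{\tM}|dF-A_f|_{G,\h}^p\,\Vol_G$ by $C\,\E_p(f)$.

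To compute $dF-A_f$, subtract the expression \eqref{eq:A_f} from the formula \eqref{eq:dF} of Proposition~\ref{dF}. The parallel transport $P_{\gamma^f_p,t}$ is common to both, and the normal components $w\,\hn_f(p)$ cancel, leaving only a contribution from the tangential input $v$:
\[
(dF-A_f)(v,w) = P_{\gamma^f_p,t}\bigl[u_1(t)(df_p-O(df_p))(v)-u_2(t)(df_p\circ S_f-O(df_p)\circ S)(v)\bigr].
\]
The crucial algebraic step is the decomposition
\[
df_p\circ S_f - O(df_p)\circ S = df_p\circ(S_f-S) + (df_p-O(df_p))\circ S,
\]
where the first piece is (up to sign) exactly the bending integrand of $\E_p(f)$ and the second is $\dist_{\g,\h}(df_p,O(\g,\h))$ composed with the bounded tensor $S$.

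I would then pass from $G$ to the product metric $G'=\pi^*\g+dt^2$ using \lemref{norm and volume equiv}, which gives $|dF-A_f|_{G,\h}^p\le c_1^{-p}|dF-A_f|_{G',\h}^p$ and $\Vol_G\le c_4\,\Vol_{G'}$. Since parallel transport is an $\h$-isometry, evaluating the $G'$-Hilbert--Schmidt norm on an orthonormal frame of the form $\{(e_i,0),(0,1)\}$ yields the pointwise bound
\[
|dF-A_f|_{G',\h}^p \le C\bigl(\dist^p_{\g,\h}(df,O(\g,\h)) + |df\circ(S-S_f)|^p_{\g,\h}\bigr)\circ\pi,
\]
where the bounds on $u_1(t),u_2(t)$ on $[-\e,\e]$ and on $|S|_{\g,\g}$ (by compactness of $\M$) are absorbed into $C$, together with the elementary inequality $(a+b+c)^p\le C(a^p+b^p+c^p)$. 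Finally, the product structure of $G'$ together with \eqref{eq:G to g norm} lets me apply Fubini over $\tM=\M\times[-\e,\e]$:
\[
\int_{\tM}|dF-A_f|_{G',\h}^p\,\Vol_{G'} \le C\int_{-\e}^{\e}\!\!\int_\M\bigl(\dist^p_{\g,\h}(df,O(\g,\h))+|df\circ(S-S_f)|^p_{\g,\h}\bigr)\Volume\,dt = 2\e\,C\,\E_p(f),
\]
finishing the proof after reabsorbing constants.

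I don't expect a genuine obstacle here; the only real care is bookkeeping the conversions between the $G$- and $G'$-norms and volumes, and verifying that the integrand genuinely pulls back from $\M$ via $\pi$ so that Fubini applies cleanly. The algebraic insertion $df\circ S_f-O(df)\circ S = df\,(S_f-S)+(df-O(df))S$ is what couples the bending and stretching parts of $\E_p$ to the single quantity $|dF-A_f|_{G,\h}$.
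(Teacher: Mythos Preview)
Your proposal is correct and follows essentially the same approach as the paper's proof: bound the distance by $|dF-A_f|_{G,\h}$, pass to the product metric $G'$, use that parallel transport is an isometry and that the normal components cancel, perform the same algebraic splitting $df\circ S_f - O(df)\circ S = df\circ(S_f-S)+(df-O(df))\circ S$, absorb the bounds on $u_1,u_2,|S|$, and finish with Fubini. The paper's argument is identical in structure and in every key step.
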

	
	\begin{proof}
		Since $ A_f\in \SO(G,\h))  $,
		\[ 
		\begin{split}
		\dist_{G,\h}^p(dF,\SO(G,\h))&\leq {|{dF-A_f}|}_{G,\h}^p\\ 
		&\hspace{-2cm}\leq C {|{dF-A_f}|}_{G',\h}^p \\
		&\hspace{-2cm}=C {\left|u_1(t)\pi^*(df-O(df)) -u_2(t)\pi^*(df\circ S_f -O(df)\circ S) \right|}_{G',\h}^p\\
		&\hspace{-2cm}\leq C \Brk{{\left|u_1(t)\pi^*(df-O(df)) \right|}_{G',\h}^p
			+{\left|u_2(t)\pi^*(df\circ S_f -O(df)\circ S)\right|}_{G',\h}^p}\\
		&\hspace{-2cm}\leq C \Brk{{\left|u_1(t)\pi^*(df-O(df)) \right|}_{G',\h}^p
			+{\left|u_2(t)\pi^*(df\circ S_f-df\circ S) \right|}_{G',\h}^p
			+{\left|u_2(t)\pi^*((df-O(df))\circ S) \right|}_{G',\h}^p}\\
		&\hspace{-2cm}= C \Brk{{\left|u_1(t)(df-O(df)) \right|}_{\g,\h}^p
			+{\left|u_2(t)(df\circ S_f-df\circ S) \right|}_{\g,\h}^p
			+{\left|u_2(t)((O(df)-df)\circ S) \right|}_{\g,\h}^p}\circ \pi \\
		&\hspace{-2cm}\leq C \Brk{{\left|u_1(t)(df-O(df)) \right|}_{\g,\h}^p
			+{\left|u_2(t) (df\circ S_f-df\circ S) \right|}_{\g,\h}^p
			+ \left|u_2(t)(O(df)-df)\right|^p_{\g,\h}  |S|_{\g,\g}^p}\circ \pi \\
		&\hspace{-2cm}\leq C \Brk{{\left|df-O(df)\right|}_{\g,\h}^p
			+{\left|df\circ (S_f- S)\right|}_{\g,\h}^p}\circ \pi.
		\end{split}
		\]
		In the passage to the second line we used the equivalence of the norms $G$ and $G'$;
		in the passage to the third line we substituted the definitions of $dF$ and $ A_f $, used the fact that parallel transport is a linear isometry and the fact that the normal components of $dF$ and $ A_f $ coincide;
		in the passage to the fourth and fifth lines we used the triangle inequality together with the convexity of $x\mapsto x^p $ ;
		in the passage to the sixth line we used \eqref{eq:G to g norm};
		the passage to the seventh line is immediate;
		in the passage to the last line we used the smoothness of  $ S $ and the compactness of $\M$ to obtain a global bound on $ {|S|}_{\g,\g}^p $, as well as the boundedness of $ u_1,u_2 $ on $ [-\e,\e] $.
		
		We conclude that
		\[ 
		\begin{split}
		&\int_{\tM} \dist_{G,\h}^p(dF,\SO(G,\h)) \, \Vol_G\\
		&\qquad\leq  C \int_{\tM}\Brk{{|df-O(df)|}_{\g,\h}^p
			+{|df\circ (S_f- S)|}_{\g,\h}^p}\circ \pi \, \Vol_G \\
		&\qquad\leq C \int_\M \int_{-\e}^{\e} \Brk{{|df-O(df)|}_{\g,\h}^p
			+{|df\circ (S_f- S)|}_{\g,\h}^p}\circ \pi \, dt \, \Vol_{\g} \\
		&\qquad\leq C \int_\M  \brk{{|df-O(df)|}_{\g,\h}^p
			+{|df\circ(S-S_f)|}_{\g,\h}^p} \, \Volume\\
		&\qquad= C\E_p(f),
		\end{split}
		\]
		where in the passage to the third line we used the equivalence of $\Vol_G$ and $\Vol_{G'}$ and then Fubini's theorem;
		in the passage to the fourth line we used the independence of the integrand on $t$. 
		\end{proof}
	
	From the above result and the fact that $ \E_p(f_n)\to 0 $ we deduce that
	\[ 
	\dist_{(G,\h)}(dF_n,SO(G,\h)) \to 0 \quad \text{in } L^p(\tM).
	\]
	By {\cite[Thm.~3]{KMS19}}, there exists a subsequence (not relabeled) $F_n$ converging in $W^{1,p}((\tM,G);(\N,\h))$ to a limit $F$, which is a smooth local isometry $(\tM,G)\to(\N,\h)$.
	
	\bigskip
	\noindent\emph{Step~IV: 
		$f:=F|_{\M \times \{0\}}$ is an isometric immersion of $\M$ in $\N$, such that $\b$ is its second fundamental form, and $F$ is its normal extension.}
	
	\bigskip
	
	\begin{proposition}
		\label{F ext f}
		Let $f:\M\to\N$ be the restriction of $F$,
		\[
		f = F|_{\M\times\{0\}}.
		\] 
		Then, $\g$ and $\b$ are the first and second fundamental forms of $f$. Moreover, $F$ is the normal extension of $f$, namely,
		\begin{equation}\label{eq:F ext f}
		F(p,t)=\exp_{f(p)}(t\hn_f(p)).
		\end{equation}
	\end{proposition}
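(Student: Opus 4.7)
The plan is to deduce all three claims from the single fact that $F\colon (\tM,G)\to(\N,\h)$ is a smooth local isometry, together with the geometric properties of $G$ established in Step~II.

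First I would settle that $f$ is an isometric immersion. The inclusion $\iota\colon\M\hookrightarrow\tM$, $p\mapsto(p,0)$, is an isometric embedding by the construction of $G$ (whose restriction to $\M\times\{0\}$ is $\g$), so $f^*\h=\iota^*F^*\h=\iota^*G=\g$.

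Next I would identify the curves $\gamma_p(t):=(p,t)$ as unit-speed geodesics in $(\tM,G)$ normal to $\M\times\{0\}$. This is immediate from the block-diagonal form of $G$ in \eqref{eq:G}: the $tt$-entry is $1$ and the mixed $dp^i\,dt$ entries vanish, so $(p_1,\ldots,p_d,t)$ are Gaussian normal coordinates and the Christoffel symbols $\Gamma^t_{tt}$, $\Gamma^i_{tt}$ vanish. Since local isometries commute with the exponential map, $t\mapsto F(p,t)$ is a geodesic in $\N$ starting at $f(p)$ with unit initial velocity $dF_{(p,0)}(\partial_t)$. This velocity is orthogonal to $dF_{(p,0)}(T_p\M\times\{0\})=df_p(T_p\M)$, hence equals $\pm\hn_f(p)$. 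The sign is fixed by orientation: the $W^{1,p}$-limit of a sequence $F_n$ with $\dist_{G,\h}(dF_n,\SO(G,\h))\to0$ satisfies $dF\in\SO(G,\h)$ a.e., and by smoothness of $F$ everywhere; combined with the orientation convention for $\hn_f$ given in \secref{subsec:sobolev_immersions} one obtains $dF_{(p,0)}(\partial_t)=+\hn_f(p)$. Therefore $F(p,t)=\exp_{f(p)}(t\hn_f(p))$, which is exactly \eqref{eq:F ext f}.

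Finally, to identify the second fundamental form of $f$, I would invoke the fact that the extrinsic geometry of a submanifold is preserved under local isometries of the ambient space. In Step~II it was shown that the second fundamental form of $\M\times\{0\}\subset(\tM,G)$ along the unit normal $\partial_t$ equals $\b$. Since $F$ is a local isometry carrying $\M\times\{0\}$ to $f(\M)$ and, by the previous paragraph, $dF(\partial_t)=\hn_f$, the Weingarten/Gauss relations transfer verbatim and give $\b_f=\b$. Among the three assertions, the only delicate point is the orientation check fixing the sign of $dF(\partial_t)$; the isometric and geodesic parts are essentially automatic from the construction of $G$ and the fact that $F$ is a local isometry.
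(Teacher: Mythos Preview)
Your argument is correct, and it takes a genuinely different (and arguably more direct) route than the paper's. The paper first claims $\b_f=\b$ from the fact that $F$ is a local isometry and $\b_\M=\b$, then defines the normal extension $\tF$ of $f$ separately, uses the explicit formulas \eqref{eq:dF} and \eqref{eq:A_f} (together with $S_f=S$ and $O(df)=df$) to identify $d\tF=A_f\in\SO(G,\h)$, and finally concludes $\tF=F$ by invoking the rigidity of local isometries (two local isometries agreeing at a point with the same differential must coincide, \cite[Prop.~5.22]{Lee18}). You instead observe directly that the curves $t\mapsto(p,t)$ are geodesics in $(\tM,G)$ (the block form of $G$ makes $(p,t)$ Fermi/Gaussian normal coordinates, so $\Gamma^k_{tt}=0$), and since local isometries commute with the exponential map you read off \eqref{eq:F ext f} immediately once the sign of $dF_{(p,0)}(\partial_t)$ is fixed. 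Your orientation argument for this sign is the same one the paper uses implicitly (it asserts that $dF_{(p,0)}$ is orientation-preserving without further comment); you make it explicit via the $L^p$-convergence $dF_n\to dF$ combined with $\dist_{G,\h}(dF_n,\SO(G,\h))\to0$. The paper's approach has the advantage of recycling Propositions~\ref{dF} and~\ref{A_f in SO}, while yours avoids those computations and the appeal to \cite[Prop.~5.22]{Lee18}, at the modest cost of verifying that the $t$-lines are geodesics in $(\tM,G)$.
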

	
	\begin{proof}
		The mapping $f$ is an isometric immersion as a restriction of the local isometry $F$ to the submanifold $\M\times\{0\}$. Since $\M$ is isometric to $ \M\times\{0\} $ we obtain that $\g$ is the first fundamental form of $f$. We proved that $ \b_\M=\b $, which combined with the fact that $F$ is a local isometry implies that $ \b_f=\b $.
		
		Let $\tF:(\tM,G) \to (\N,\h) $  be the normal extension of $f$,  $ \tF(p,t)=\exp_{f(p)}(t\hn_f(p)) $; we will show that $ \tF=F $. As stated above, $ \b_f=\b $, which in turn implies that the corresponding shape operator, $ S_f $, is equal to $ S $. In addition, since $f$ is an isometric immersion,  $ O(df)=df $. Thus comparing \eqref{eq:dF} and \eqref{eq:A_f}, we obtain that:
		\[
		d\tF=A_f\in SO(G,\h),
		\]
		which means that $ \tF $ is an orientation-preserving local isometry.  
		
		Since $ F,\tF:\tM \to \N $ are local isometries and $\tM$ is connected, it suffices to show that 
		\[
		\tF(p,0)=F(p,0) \Textand d\tF_{(p,0)} =dF_{(p,0)}. 
		\]
		at a single point $ (p,0) \in \M \times \{0\} \subset \tM  $ 
		to conclude that $F=\tF$ everywhere \cite[Prop.~5.22]{Lee18}.
		
		The first condition is immediate from the definitions of $f$ and $\tF$, since
		\[
		\tF(p,0)=\exp_{f(p)}(0)=f(p)=F(p,0).
		\]
		As for the second condition, since $d\tF_{(p,0)}$ and $dF_{(p,0)}$ are orientation-preserving isometries, it suffices to show that they coincide on $\operatorname{Im}( d\pi) \simeq T_p\M$, since it  is a subspace of codimension $1$. Indeed, from \eqref{eq:dF},
		\[
		d\tF_{(p,0)}(v,0) =P_{\gamma^f_p,0}\Brk{u_1(0)\, df_p(v)-u_2(0)\, df_p(S_f(v))+0\, \hn_f(p)} = df_p(v)=dF_{(p,0)}(v,0),
		\]
		which completes the proof.
		\end{proof}
	
	\bigskip
	\noindent\emph{Step~V: $ f_n \to f $ in $ W^{1,p}(\M;\N)$.}
	\bigskip
	
	As described in \secref{subsec:sobolev}, a metric on $W^{1,p}(\M;\N)$ is defined via an isometric embedding $\iota:(\N,\h) \to (\R^D,\euc)$. 
	To this end, we fix such an embedding throughout this section.
	
	\begin{proposition}
		\label{averaged_seq}
		There exists a sequence $ \e_n \to 0 $, $\e_n\in(0,\e]$, such that
		\[ 
		\frac{1}{2\e_n}\int_\M\int_{-\e_n}^{\e_n} \brk{|\iota\circ F -\iota \circ F_n|_\euc^p + |d(\iota\circ F - \iota \circ F_n)|_{G',\euc}^p} dt \,\Vol_{\g} \to 0.
		\] 
	\end{proposition}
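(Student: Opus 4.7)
The plan is to transfer the ambient $W^{1,p}$-convergence $F_n \to F$ on $(\tM,G)$ to an $L^1$-convergence in the transverse variable $t$, and then extract the sequence $\e_n$ by a simple normalization argument.

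First I would post-compose with the isometric embedding $\iota:(\N,\h)\hookrightarrow(\R^D,\euc)$. Since $\iota$ is smooth and $\N$ compact, $F_n\to F$ in $W^{1,p}((\tM,G);\N)$ yields $\iota\circ F_n\to \iota\circ F$ in $W^{1,p}(\tM;\R^D)$ (measured with the metric $G$). Then I would use Lemma~\ref{norm and volume equiv} (equivalence of $G,G'$ for both the pointwise norms on $T^*\tM\otimes\R^D$ and for the volume forms) to obtain
\[
\int_{\tM}\Brk{|\iota\circ F-\iota\circ F_n|_\euc^p+|d(\iota\circ F-\iota\circ F_n)|_{G',\euc}^p}\Vol_{G'} \longrightarrow 0.
\]

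Second, because $G'=\pi^*\g+dt^2$ has a product structure, $\Vol_{G'}=\Vol_\g\otimes dt$ and Fubini applies. Writing
\[
h_n(t):=\int_\M \Brk{|\iota\circ F-\iota\circ F_n|_\euc^p+|d(\iota\circ F-\iota\circ F_n)|_{G',\euc}^p}\,\Vol_\g,
\]
the previous step gives $a_n:=\int_{-\e}^{\e}h_n(t)\,dt\to 0$. The goal is to produce $\e_n\in(0,\e]$ with $\e_n\to 0$ and
\[
\frac{1}{2\e_n}\int_{-\e_n}^{\e_n}h_n(t)\,dt\longrightarrow 0.
\]

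Third, I would simply set $\e_n:=\min\{\e,\sqrt{a_n}\}$ (taking a tail where $a_n>0$, otherwise the statement is trivial for that index). Then $\e_n\to 0$, and since $h_n\ge 0$,
\[
\frac{1}{2\e_n}\int_{-\e_n}^{\e_n}h_n(t)\,dt\;\le\;\frac{a_n}{2\e_n}\;\le\;\tfrac12\sqrt{a_n}\longrightarrow 0,
\]
as required. None of the steps seems truly hard: the only mild subtlety is the passage from the intrinsic $W^{1,p}(\tM;\N)$-convergence to the extrinsic Euclidean-valued convergence measured with $G'$, which is precisely where Lemma~\ref{norm and volume equiv} (pointwise comparison of $G$ and $G'$, plus comparison of volume forms) is used; after that, the diagonal choice of $\e_n$ is immediate.
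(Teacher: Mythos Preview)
Your proposal is correct and follows essentially the same route as the paper: both arguments pass from the $G$-metric to the product metric $G'$ via Lemma~\ref{norm and volume equiv}, apply Fubini to obtain a scalar sequence $a_n\to 0$, and then pick $\e_n$ as (essentially) the square root of $a_n$ so that $a_n/\e_n\to 0$. The only cosmetic difference is that the paper takes $\e_n=(d_{W^{1,p}}(F_n,F))^{p/2}$ (with the convention $\e_n=\e/n$ when this vanishes), whereas you set $\e_n=\min\{\e,\sqrt{a_n}\}$; just make sure you also specify a choice (e.g.\ $\e_n=\e/n$) on indices with $a_n=0$ so that $\e_n\to 0$ is guaranteed in all cases.
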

	
	\begin{proof}
		By definition,
		\[
		\brk{d_{W^{1,p}(\M;\N)}(F_n,F)}^p = \int_{\tM} \brk{|\iota\circ F -\iota \circ F_n|_\euc^p + |d(\iota\circ F - \iota \circ F_n)|_{G,\euc}^p}\, \Vol_G.
		\]
		Since $F_n\to F$ in $W^{1,p}(\M;\N)$, we may define, for $n$ large enough,
		\[ 
		\e_n=
		\begin{cases}
		\brk{d_{W^{1,p}(\M;\N)}(F_n,F) }^{p/2}  &  d_{W^{1,p}(\M;\N)}(F_n,F)\neq 0\\
		\frac{\e}{n} &  d_{W^{1,p}(\M;\N)}(F_n,F)=0,
		\end{cases}
		\]
		which satisfies the required properties.
		Using the equivalence of the metrics $G$ and $G'$ and that $ \e_n\leq \e $,
		\[
		\begin{split}
		&\frac{1}{2\e_n}\int_\M\int_{-\e_n}^{\e_n} \brk{|\iota\circ F -\iota \circ F_n|_\euc^p + |d(\iota\circ F -\iota \circ F_n)|_{G',\euc}^p} dt \,\Vol_{\g}\\
		&\qquad \le \frac{1}{2\e_n}\int_{\tM} \brk{|\iota\circ F -\iota \circ F_n|_\euc^p + |d(\iota\circ F -\iota \circ F_n)|_{G',\euc}^p} \Vol_{G'}\\
		&\qquad \leq \frac{C}{2\e_n}\int_{\tM} \brk{|\iota\circ F -\iota \circ F_n|_\euc^p + |d(\iota\circ F - \iota \circ F_n)|_{G,\euc}^p} \Vol_G\\
		&\qquad = \frac{C}{2\e_n} \brk{d_{W^{1,p}(\M;\N)}(F_n,F) }^p\\
		&\qquad \leq C\brk{d_{W^{1,p}(\M;\N)}(F_n,F) }^{p/2}\to 0.
		\end{split} 
		\]
		\end{proof}
	
	We now prove that $f_n \to f$ in  $ W^{1,p}(\M;\N) $; first, we prove convergence  in  $ L^p(\M;\N) $.
	Denote $R_n=\iota \circ F_n - \iota \circ f_n\circ \pi$, where $\pi:\tM\to \M$ is the projection defined in \eqref{eq:pi}. 
	Denote by $ d_{\h} $ the distance function induced by $\h$ on $ \N $. Since $ \iota:(\N,\h)\to (\R^D,\euc)  $ is an isometric embedding it preserves the length of curves, which implies that $ |\iota(x)-\iota(y)|_\euc\leq d_{\h}(x,y) $ for every $ x,y\in \N $. From the definitions of $ \exp_{f_n(p)} $ and $ d_{\h} $, we obtain
	\[ 
	|R_n(p,t)|_\euc=\Abs{\iota \circ F_n(p,t) - \iota \circ f_n(p)}_\euc\leq d_{\h}\brk{\exp_{f_n(p)}(t\hn_{f_n}(p)),f_n(p)}=|t|.
	\]
	The same estimate holds for $R = \iota \circ F - \iota \circ f \circ\pi$.
	Hence,
	\[ 
	\begin{split}
	&\int_\M \Abs{\iota \circ f - \iota \circ f_n}_\euc^p \,\Vol_{\g}\\
	&\qquad = \frac{1}{2\e_n} \int_\M \int_{-\e_n}^{\e_n} \Abs{\iota \circ f\circ\pi - \iota \circ f_n \circ\pi}_\euc^p\, dt \,\Vol_{\g}\\
	&\qquad \leq \frac{C}{2\e_n} \int_\M \int_{-\e_n}^{\e_n} \brk{\Abs{\iota \circ F - \iota \circ F_n}_\euc^p
		+ \Abs{R_n}_\euc^p + \Abs{R }_\euc^p} \, dt \,\Vol_{\g}\\
	&\qquad \leq \frac{C}{2\e_n} \int_\M \int_{-\e_n}^{\e_n} \brk{\Abs{\iota \circ F - \iota \circ F_n}_\euc^p
		+ |t|^p} \, dt \,\Vol_{\g}.
	\end{split}
	\]
	From \propref{averaged_seq} we conclude that $ \int_\M \Abs{\iota \circ f - \iota \circ f_n}_\euc^p \,\Vol_{\g} \to 0  $ as desired.
	\smallskip
	
	Next, we prove the convergence $df_n \to df$ in $ L^p $ (i.e., the convergence $ d(\iota \circ f_n)\to d(\iota \circ f) $ in $ L^p(\M;T^*\M\otimes \R^D) $).
	We extend the projection $P_{\parallel}: T\tM|_\M \to T\M$ to a map $T\tM \to T\tM $ by $P_{\parallel}(v,w)=(v,0)$.
	Using Fubini's theorem and \eqref{eq:G to g norm}, we estimate the distance between $df_n$ and $df$ as follows:  
	\[ 
	\begin{split}
	&\int_\M \Abs{d(\iota\circ f) - d(\iota \circ f_n)}_{\g,\euc}^p \,\Vol_{\g}\\
	&\qquad = \frac{1}{2\e_n} \int_\M \int_{-\e_n}^{\e_n} \Abs{\pi^* (d\iota \circ df - d\iota \circ df_n)}_{G',\euc}^p\, dt \,\Vol_{\g}\\
	&\qquad = \frac{1}{2\e_n} \int_\M \int_{-\e_n}^{\e_n} \big|\pi^* (d\iota \circ df - d\iota \circ df_n) - (d\iota \circ dF-d\iota \circ dF_n)\circ P_{\parallel}\\
	&\qquad \qquad \qquad \qquad \qquad + (d\iota \circ dF-d\iota \circ dF_n)\circ P_{\parallel} \big|_{G',\euc}^p\, dt \,\Vol_{\g}\\
	&\qquad \leq C (I_1+I_2+I_3),\\
	\end{split}
	\]
	where
	\[ 
	\begin{split}
	I_1&=\frac{1}{2\e_n}\int_\M\int_{-\e_n}^{\e_n} | (d\iota \circ dF-d\iota \circ dF_n)\circ P_{\parallel}|_{G',\euc}^p \, dt \,\Vol_{\g}\\
	I_2&=\frac{1}{2\e_n}\int_\M\int_{-\e_n}^{\e_n} |\pi^* (d\iota \circ  df)-(d\iota \circ dF)\circ P_{\parallel}|_{G',\euc}^p \, dt \,\Vol_{\g}\\
	I_3&=\frac{1}{2\e_n}\int_\M\int_{-\e_n}^{\e_n} |\pi^* (d\iota \circ  df_n) - (d\iota \circ dF_n)\circ P_{\parallel}|_{G',\euc}^p \, dt \,\Vol_{\g}. \\
	\end{split}
	\]
	
	We evaluate each of these three integrals separately. From \propref{averaged_seq},
	\[ 
	\begin{split}
	\limsup_{n\to\infty} I_1
	&\leq \limsup_{n\to\infty}  \frac{1}{2\e_n}\int_\M\int_{-\e_n}^{\e_n}
	|d(\iota \circ F-\iota \circ F_n)|_{G',\euc}^p \, dt \,\Vol_{\g} = 0.
	\end{split}
	\]
	In order to evaluate $I_2$ and $I_3$ we use the following lemma:
	\begin{lemma}
		\label{par_aprox}
		Let $ \gamma:[-\e,\e] \to (\N,\h)  $ be a unit speed geodesic and let $ x\in T_{\gamma(0)}\N $. Define $ R_{\gamma,x}(t):[-\e,\e] \to \R^D $ by 
		\[
		R_{\gamma,x}(t)=(d\iota)_{\gamma(t)} \circ P_{\gamma,t}(x)-(d\iota)_{\gamma(0)} (x)
		\]
		then there exists a constant $ C $ (which does not depend on $\gamma$ or $x$) such that $ |R_{\gamma,x}(t)|_\euc \leq C|t||x|_{\h}$.
	\end{lemma}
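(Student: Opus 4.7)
The plan is to reduce the claim to a uniform bound on the derivative of an auxiliary curve in $\R^D$. Set $V(s):=P_{\gamma,s}(x)\in T_{\gamma(s)}\N$ and define
\[
\phi(s):=(d\iota)_{\gamma(s)}(V(s))\in T_{\iota(\gamma(s))}\R^D,\qquad s\in[-\e,\e],
\]
which, via the canonical trivialization $T\R^D\simeq\R^D\times\R^D$, is a smooth $\R^D$-valued curve satisfying $R_{\gamma,x}(t)=\phi(t)-\phi(0)$. By the fundamental theorem of calculus,
\[
|R_{\gamma,x}(t)|_\euc\le |t|\sup_{|s|\le\e}|\phi'(s)|_\euc,
\]
so it suffices to produce a constant $C$, independent of $\gamma$ and $x$, with $|\phi'(s)|_\euc\le C|x|_\h$.

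To estimate $\phi'$, regard $\phi$ as the vector field $\iota_*V$ along the curve $\iota\circ\gamma$ in the submanifold $\iota(\N)\subset\R^D$. Since $\iota$ is an isometric embedding, the Gauss formula yields
\[
\phi'(s)=\nabla^{\R^D}_{d\iota(\dot\gamma(s))}\iota_*V=\iota_*\bigl(\nabla^{\h}_{\dot\gamma(s)}V\bigr)+\II^{\iota}(\dot\gamma(s),V(s)),
\]
where $\II^{\iota}$ is the second fundamental form of $\iota(\N)$ in $\R^D$. Because $V=P_{\gamma,\cdot}(x)$ is parallel along $\gamma$ with respect to the Levi-Civita connection of $\h$, the tangential term vanishes, leaving $\phi'(s)=\II^{\iota}(\dot\gamma(s),V(s))$.

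The bound then follows from three uniform estimates: parallel transport is an $\h$-isometry, so $|V(s)|_\h=|x|_\h$; $\gamma$ has unit speed, so $|\dot\gamma(s)|_\h=1$; and since $\N$ is compact, $\iota(\N)\subset\R^D$ is a compact submanifold and $\|\II^{\iota}\|_\infty<\infty$. Consequently $|\phi'(s)|_\euc\le \|\II^{\iota}\|_\infty|x|_\h$, giving the lemma with $C=\|\II^{\iota}\|_\infty$, a constant that depends only on $\iota$ and $\N$.

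I do not foresee a genuine obstacle. The only care-point is to identify the ordinary $\R^D$-valued derivative of $\phi$ with the Euclidean covariant derivative of $\iota_*V$ along $\iota\circ\gamma$, which is immediate from the flatness and canonical trivialization of $T\R^D$ and is precisely what enables the Gauss-formula decomposition.
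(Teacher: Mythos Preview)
Your argument is correct, and it is cleaner than the route taken in the paper. The paper proves \lemref{par_aprox} by passing to a finite cover of $\N$ by coordinate charts, writing the parallel transport equation $\dot X^k+\Gamma^k_{ij}(\gamma)\dot\gamma^i X^j=0$ in coordinates, applying a Gronwall-type estimate to bound $|X(t)-X(0)|$ by $C|X(0)||t|$, and then separately bounding $|d\iota_{\gamma(t)}-d\iota_{\gamma(0)}|$ via the Hessian of $\iota$ and the unit speed of $\gamma$; the two pieces are combined by a triangle-inequality splitting.

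Your approach bypasses the coordinate ODE and the Gronwall argument entirely: by recognizing $\phi(s)=d\iota_{\gamma(s)}(P_{\gamma,s}x)$ as the push-forward of a parallel vector field and invoking the Gauss formula, the tangential contribution vanishes identically and $\phi'(s)=\II^\iota(\dot\gamma(s),V(s))$. The bound then drops out immediately from $|\dot\gamma|_\h=1$, $|V(s)|_\h=|x|_\h$, and the compactness of $\N$, and you even obtain the explicit constant $C=\|\II^\iota\|_\infty$. Note also that your proof does not use that $\gamma$ is a geodesic, only that it has unit speed; the paper's proof likewise never uses the geodesic equation for this lemma, though this is less transparent in the coordinate computation. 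The trade-off is that the paper's argument is self-contained at the level of ODE estimates, whereas yours imports the Gauss formula for vector fields along a curve---a standard fact, but one step higher on the abstraction ladder.
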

	
	\lemref{par_aprox} can be proved by working in coordinates and implementing a Gronwall's inequality type argument on the set of parallel transport equations. For a detailed proof see Appendix~\ref{sec:appC}.
	
	Since $ \E_p(f_n) \to 0 $ and $\M$ is compact, then there exists a bound $ C $ such that for every $ n $
	\beq
	\label{eq:bounds on df,dfs_f}
	\int_\M |df_n|_{\g,\h}^p \Vol_{\g}\leq C \Textand \int_\M |df_n\circ S_{f_n}|_{\g,\h}^p \Vol_{\g}\leq C.
	\eeq
	The first inequality follows from the fact that $ O(\g,\h) $ is bounded and that the stretching term in $ \E_p(f_n) $ tends to zero. The second inequality follows from using the first inequality together with the fact that the bending term in $ \E_p(f_n) $ tends to zero.
	In addition, since $ \iota:(\N,\h) \to (\R^{D},\euc) $ is an isometric embedding we can choose $ C $ such that  $ |d\iota|_{\h,\euc}^p\leq C $.
	
	Consider now $ I_3 $. In order to compare $df_n$ and $dF_n$, we first compare $dF_n$ and the parallel transport of $df_n$ so their image would lie in the same tangent space---$ T_{F_n(p,t)}\N $.
	We then use Lemma~\ref{par_aprox} to estimate the distance between $df_n$ and its parallel transport.
	
	We estimate the distance between $dF_n$ and the parallel transport of $df_n$ by
	\[
	\begin{split}
	&\Abs{(d\iota \circ dF_n\circ P_{\parallel})_{(p,t)}-(d\iota)_{\gamma^{f_n}_{p}(t)} \circ P_{\gamma^{f_n}_{p},t} \circ (df_n)_p \circ (d\pi)_{p,t}}_{G',\euc}^p \\
	&=\Abs{(d\iota)_{F_n(p,t)}\circ\Brk{(dF_n\circ P_{\parallel})_{(p,t)}-P_{\gamma^{f_n}_{p},t} \circ (df_n)_p \circ (d\pi)_{p,t}}}_{G',\euc}^p\\
	&\qquad\qquad\qquad\qquad \leq  C|t|^p \brk{|(df_n)_p|_{\g,\h}^p + |(df_n)_p\circ (S_{f_n})_p|_{\g,\h}^p}\circ \pi.
	\end{split}
	\]
	This estimate follows from \eqref{eq:dF}, the fact that parallel transport is a linear isometry, the definition of $ u_1(t) $ and $ u_2(t) $, \eqref{eq:G to g norm} and the global bound on $ |d\iota|_{\h,\euc}^p $.
	It follows that 
	\[
	\begin{split}
	I_3 &\leq \frac{C}{2\e_n}\int_\M\int_{-\e_n}^{\e_n} \big(\Abs{\brk{(d\iota)_{f_n(p)} - (d\iota)_{\gamma^{f_n}_{p}(t)} \circ  P_{\gamma_{f_n(p)},t} } \circ  (df_n)_p\circ (d\pi)_{(p,t)}}_{G',\euc}^p \\
	&\qquad \qquad \qquad \qquad\qquad\quad\, +|t|^p \brk{|(df_n)_p|_{\g,\h}^p + |(df_n)_p\circ (S_{f_n})_p|_{\g,\h}^p}\circ \pi\big)\, \,dt \,\Vol_{\g}\\
	&\leq \frac{C}{2\e_n}\int_{-\e_n}^{\e_n}|t|^p \int_\M  \brk{|(df_n)_p|_{\g,\h}^p + |(df_n)_p\circ (S_{f_n})_p|_{\g,\h}^p}\circ \pi \,  \Vol_{\g}\,dt\\
	&\leq \frac{C}{2\e_n}\int_{-\e_n}^{\e_n}|t|^p \,dt \to 0,
	\end{split}
	\]
	where in the passage to the second line we used Fubini's theorem together with \lemref{par_aprox} and \eqref{eq:G to g norm}; and in the passage to the third line we used \eqref{eq:bounds on df,dfs_f}.
	
	$I_2$ can be evaluated in the exact same manner. To conclude
	\[ 
	\int_\M |d\iota \circ df - d\iota \circ df_n|_{\g,\euc}^p \,\Vol_{\g}\leq C(I_1 + I_2 +I_3 ) \to 0,
	\]
	which completes the proof that $f_n \to f$ in $W^{1,p}(\M;\N)$.

	\bigskip
	\noindent\emph{Step~VI: $ \hn_{f_n} \to \hn_{f} $ in $ W^{1,p}(\M;T\N) $.}
	\bigskip
	
	We now complete the proof of \thmref{THM:MAIN_CONST_CURV}, by showing that $\hn_{f_n} \to \hn_{f} $ in $ W^{1,p}(\M;T\N) $, i.e. that for every isometric embedding $ i: (T\N,S_{\h} ) \to \R^K $ we obtain the convergence $ i\circ \hn_{f_n} \to i\circ \hn_{f}  $ in $ W^{1,p}(T\N;R^K) $.
	We do that by first showing that for every isometric embedding $ \iota:(\N,\h) \to \R^D $ we obtain the convergence $d\iota \circ \hn_{f_n} \to d\iota \circ \hn_{f} $ in $L^p$, and the convergence of the \emph{covariant} derivatives   $d\iota \circ\nabla\hn_{f_n} \to d\iota \circ\nabla\hn_{f} $ in $L^p$ (Proposition~\ref{nf convergence}).
	It is important to note that $ d\iota:TN \to \R^{2D} $ is not necessarily an isometric embedding of $ T\N $ (with respect to the Sasaki metric) and that the covariant derivative $ \nabla\hn_{f_n}: T\M \to T\N $ is not the full derivative $ d\hn_{f_n}:T\M \to TT\N $.
	We then prove that Proposition~\ref{nf convergence} together with the convergence $f_n \to f$ in $W^{1,p}(\M;\N)$, implies the convergence of $\hn_{f_n} \to \hn_{f} $ in $ W^{1,p}(\M;T\N) $. 
	This last part, being a general claim about convergence of vector fields in Sobolev spaces between manifolds, is postponed to Proposition~\ref{pn:appD} in Appendix~\ref{sec:appD}.

	\begin{proposition}
		\label{nf convergence}
		For every smooth isometric embedding $ \iota:(\N,\h) \to \R^D $,
		\[ 
		\begin{split}
		&d\iota \circ \hn_{f_n} \to d\iota \circ \hn_{f} \qquad \text{in } L^p(\M;\R^D)\\
		&d\iota \circ \nabla \hn_{f_n} \to d\iota \circ \nabla \hn_{f} \qquad  \text{in } L^p(\M;T^*\M \otimes \R^D).\\
		\end{split}
		\]
	\end{proposition}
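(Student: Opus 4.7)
My plan is to leverage the Weingarten identity for Sobolev immersions. From the definition \eqref{eq:shape operator} together with the pointwise identity $(\nabla_X \hn_f, \hn_f)_\h = 0$ (a consequence of $|\hn_f|_\h \equiv 1$, which forces $\nabla_X \hn_f$ to lie in the tangential image of $df$), one obtains
\[
\nabla_X \hn_f = -df \circ S_f(X).
\]
Combined with Step~IV, which gives $S_f = S$ for the limit $f$, both assertions of the proposition then reduce to convergence statements about $df_n$, $S_{f_n}$ and $\hn_{f_n}$ (or rather their extrinsic realizations via $\iota$).

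For the convergence $d\iota \circ \hn_{f_n} \to d\iota \circ \hn_f$, I pass to a subsequence (not relabeled) along which $f_n \to f$ and $df_n \to df$ pointwise almost everywhere. Since $f$ is an isometric immersion, $df_p$ has full rank a.e., and at every such point the map sending an injective linear map $A:T_p\M \to T_{f(p)}\N$ to the unique oriented unit normal of $A(T_p\M)$ is continuous. Combined with continuity of $d\iota:T\N\to\R^D$, this yields pointwise a.e.\ convergence $d\iota \circ \hn_{f_n} \to d\iota \circ \hn_f$. Since $|\hn_{f_n}|_\h \equiv 1$ and $d\iota$ is bounded on the compact manifold $\N$, these functions are uniformly bounded in $L^\infty$, so dominated convergence upgrades the pointwise convergence to $L^p$-convergence; a standard subsequence-of-subsequence argument then gives convergence of the full sequence.

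For the covariant derivative, I use the Weingarten identity to decompose
\[
d\iota \circ \nabla \hn_{f_n} - d\iota \circ \nabla \hn_f = -d\iota \circ df_n \circ (S_{f_n} - S) + (d\iota \circ df - d\iota \circ df_n) \circ S.
\]
The first summand has $L^p$-norm bounded by $C\|df_n \circ (S - S_{f_n})\|_{L^p} \le C\,\E_p(f_n)^{1/p} \to 0$, using the uniform bound on $d\iota$ and the bending part of the energy. For the second summand, observe that $d\iota \circ df_n = d(\iota \circ f_n)$ and that $\iota \circ f_n \to \iota \circ f$ in $W^{1,p}(\M;\R^D)$ by Step~V; since $S$ is smooth and bounded, this term also vanishes in $L^p$. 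The most delicate point is the first convergence, where $\hn_{f_n}$ takes values in varying fibres $T_{f_n(p)}\N$; extrinsifying via $d\iota$, together with the uniform pointwise bound $|\hn_{f_n}|_\h \equiv 1$ and compactness of $\N$, is precisely what makes dominated convergence applicable and clean.
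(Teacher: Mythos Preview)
Your proof is correct, and for the second convergence (of $d\iota \circ \nabla \hn_{f_n}$) it coincides with the paper's argument essentially verbatim: both use the Weingarten identity $\nabla \hn_{f_n} = -df_n \circ S_{f_n}$ together with $S_f = S$, split the difference into a bending-energy term and a term controlled by $d(\iota\circ f_n)\to d(\iota\circ f)$ in $L^p$, and bound $|d\iota|$ and $|S|$ using compactness.

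For the first convergence, however, your route genuinely differs from the paper's. The paper re-uses the thickening machinery of Steps~I--V: it averages over $\M\times[-\e_n,\e_n]$, observes that $P_{\gamma^{f_n}_p,t}(\hn_{f_n}) = dF_n(\partial_t)$ (and likewise for $f$), and then combines \propref{averaged_seq} (convergence $dF_n\to dF$ on the thickened domain) with \lemref{par_aprox} (the $O(|t|)$ error introduced by parallel transport) to obtain the $L^p$ convergence. Your argument is more direct: pointwise a.e.\ convergence of $(f_n, df_n)$ along a subsequence, continuity of the normal-extraction map, and dominated convergence using the uniform bound $|\hn_{f_n}|_\h\equiv 1$. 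Your approach is more elementary and independent of the thickened construction; the paper's approach, by contrast, exploits machinery already built and avoids having to argue about continuity of the normal as the base point varies. One small imprecision in your phrasing: you write ``the map sending an injective linear map $A:T_p\M \to T_{f(p)}\N$ to the unique oriented unit normal of $A(T_p\M)$ is continuous'', but $df_n|_p$ lands in $T_{f_n(p)}\N$, not $T_{f(p)}\N$. What you actually need (and implicitly use) is that after composing with $d\iota$ the normal is a continuous function of the pair $(\iota(f_n(p)),\, d(\iota\circ f_n)_p)$, the tangent space $d\iota(T_{f_n(p)}\N)\subset\R^D$ varying continuously with the base point since $\iota$ is a smooth embedding of a compact manifold; at any $p$ where $df|_p$ has full rank (hence everywhere, $f$ being a smooth isometric immersion) and $df_n|_p\to df|_p$, the normals then converge.
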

	
	\begin{proof}
		Let $ \iota:\N \to \R^{D} $ be an isometric embedding, then
		\[ 
		\begin{split}
		&\int_{\M} |d\iota \circ \hn_{f_n} - d\iota \circ \hn_{f}|_{\euc}^{p} \,\Vol_{\g}\\
		&\qquad \qquad=\frac{1}{2\e_n}\int_\M\int_{-\e_n}^{\e_n} |(d\iota \circ \hn_{f_n} - d\iota \circ \hn_{f})\circ \pi|_{\euc}^{p} \, dt \,\Vol_{\g}\\
		&\qquad \qquad\leq C \frac{1}{2\e_n}\int_\M\int_{-\e_n}^{\e_n}\left( |(d\iota \circ \hn_{f_n})\circ \pi - (d\iota \circ P_{\gamma^{f_n}_{p},t}\circ\hn_{f_n})\circ \pi|_{\euc}^{p}\right.\\
		&\qquad \qquad \qquad \qquad \qquad \qquad \left.+|(d\iota \circ P_{\gamma^{f_n}_{p},t}\circ\hn_{f_n})\circ \pi-(d\iota \circ P_{\gamma^{f}_{p},t}\circ\hn_{f})\circ \pi |_{\euc}^{p}\right.\\
		&\qquad \qquad \qquad \qquad \qquad \qquad \left.+ |(d\iota \circ P_{\gamma^{f}_{p},t}\circ\hn_{f})\circ \pi - (d\iota \circ \hn_{f})\circ \pi |_{\euc}^{p} \right)	 \, dt \,\Vol_{\g} \\
		&\qquad \qquad\leq C\frac{1}{2\e_n}\int_\M\int_{-\e_n}^{\e_n} \brk{|t|^p|\hn_{f_n}|_{\h}^{p}
			+ |(d\iota \circ dF_n - d\iota \circ dF)(\partial_t)|_{\euc}^{p}
			+ |t|^p|\hn_{f}|_{\h}^{p}} \, dt \,\Vol_{\g} \\
		&\qquad \qquad\leq C\frac{1}{2\e_n}\int_\M\int_{-\e_n}^{\e_n}\brk{ |t|^p
			+ |(d\iota \circ dF_n - d\iota \circ dF)|_{G',\euc}^{p}|(\partial_t)|_{G'}^{p}
			+ |t|^p} \, dt \,\Vol_{\g} \\
		&\qquad \qquad\leq C\frac{1}{2\e_n}\int_\M\int_{-\e_n}^{\e_n} \brk{2|t|^p
			+ |(d\iota \circ dF_n - d\iota \circ dF)|_{G',\euc}^{p}}\, dt \,\Vol_{\g}  \to 0 \, ,\\
		\end{split}
		\]
		where in the second inequality (fifth line) we use \lemref{par_aprox}, \eqref{eq:dF} and \eqref{eq:F ext f};
		in the third inequality we use that fact that $ \hn_{f_n} $ and $ \hn_{f} $ are unit vector fields;  in the fourth inequality that $ |\partial_t|_{G'}=1 $; Finally, in the limit we use \propref{averaged_seq} and the fact that $ \M $ is compact.
		
		From \eqref{eq:shape operator} we obtain that $ df_n\circ S_{f_n}=-\nabla \hn_{f_n} $ and $ df\circ S_f=-\nabla \hn_f $. Since $ \b $ is the second fundamental form of $ f $ we know that $ S_f=S $. Thus,
		\[ 
		\begin{split}
		&\int_{\M} |d\iota \circ \nabla \hn_f - d\iota \circ \nabla \hn_{f_n} |_{\g,\euc}^{p} \,\Vol_{\g}\\
		&\qquad \qquad =\int_{\M} |d\iota \circ df_n\circ S_{f_n} - d\iota \circ df\circ S|_{\g,\euc}^{p} \,\Vol_{\g}\\
		& \qquad \qquad \leq C \int_{\M} \brk{ |d\iota \circ df_n\circ S_{f_n} - d\iota \circ df_n\circ S|_{\g,\euc}^{p}
			+ |d\iota \circ df_n\circ S - d\iota \circ df\circ S|_{\g,\euc}^{p}}  \,\Vol_{\g}\\
		& \qquad \qquad = C \int_{\M} \brk{ |d\iota \circ( df_n\circ S_{f_n} -df_n\circ S)|_{\g,\euc}^{p}
			+ |(d\iota \circ df_n - d\iota \circ df)\circ S|_{\g,\euc}^{p}}  \,\Vol_{\g}\\
		& \qquad \qquad \leq C \int_{\M} \brk{ |d\iota|_{\h,\euc}^{p} |df_n\circ S_{f_n} -df_n\circ S|_{\g,\h}^{p}
			+ |d\iota \circ df_n - d\iota \circ df|_{\g,\euc}^{p} |S|_{\g,\g}^{p}}  \,\Vol_{\g}\\
		& \qquad \qquad \leq C \brk{\E_p(f_n) 
			+ \int_{\M}  |d\iota \circ df_n - d\iota \circ df|_{\g,\euc}^{p} \,\Vol_{\g}} \to 0 \, ,\\
		\end{split}
		\]
		where in the passage to the last line we used the compactness of $ \M $ and $ \N $ to obtain global bounds on $ |S|_{\g,\g} $ and $ |d\iota|_{\h,\euc} $ respectively; Finally, the limit is obtained since $ \E_p(f_n) \to 0 $ and $ f_n \to f $ in $ W^{1,p}(\M;\R^{D}) $.
		\end{proof}

	As proved in Appendix~\ref{sec:appD}, \propref{nf convergence} combined with $ f_n \to f $ in $ W^{1,p}(\M;\N) $ implies that $ \hn_{f_n} \to \hn_f  $ in $ W^{1,p}(\M;T\N) $, which completes the proof of \thmref{THM:MAIN_CONST_CURV}.

	\section{Proof of \thmref{thm:main_euc}}
	\label{sec:proof_main_euc}
	
	The proof of \thmref{thm:main_euc} follows the exact same lines as the proof of \thmref{THM:MAIN_CONST_CURV}, and is, in fact, simpler in some places, hence we will only emphasize the differences. We divide the proof into the same six steps as in the proof of \thmref{THM:MAIN_CONST_CURV}.
	
	\bigskip
	\noindent\emph{Step~I: Extending immersions $f:M\to \Rd$ to $\tM =\M \times [-\e,\e]$ and calculating the differential of the extension.}
	\bigskip
	
	In Euclidean space, $\exp_{f_n(p)}(t\hn_f(p))=f(p)+t\hn_f(p) $. 
	I.e., the geodesic is a straight line through $f(p)$ with velocity $\hn_f(p)$. Therefore, given an immersion $ f:\M \to \Rd $ we extend it to $ F:\tM \to \Rd $ by
	\beq
	\label{eq:F_def_euc}
	F=f\circ\pi +t\,\hn_f \circ \pi.
	\eeq
	Differentiating,
	\beq
	\begin{split}
		dF &= \pi^* df + t\, \pi^* d\hn_f + dt \otimes\brk{\hn_f\circ \pi}\\
		&= \pi^* df -t\, \pi^*\brk{df\circ S_f} + dt \otimes\brk{\hn_f\circ \pi},\\
	\end{split}
	\label{eq:dF_euc}
	\eeq
	where, as mentioned in \secref{subsec:Second fundamental form}, we used that $ d\hn_f = -df\circ S_f $. Taking into account that in Euclidean space parallel transport is trivial we obtain that \eqref{eq:dF_euc} is just a special case of \eqref{eq:dF}.
	
	\bigskip
	\noindent\emph{Step~II: Endowing $\tM$ with a metric $G$, which in particular satisfies that $(\M,\g)$ is isometric to $ (\M\times\{0\},G|_{\M\times\{0\}})$ and $ \b $ is the second fundamental form of $\M$ in $\tM$.}
	\bigskip
	
	In this case, the metric $G$ is given by
	\[
	G=\pi^*\g -2t \, \pi^*\b +t^2\pi^*S^*\g +dt^2,
	\]
	which is simply \eqref{eq:G} with $ u_1(t)=1,u_2(t)=t $.
	The proof that $G$ is a metric for small enough $\e>0$, and that the map $\M \to \M\times \{0\}$ is an isometric embedding with second fundamental form $\b$ is the same as in the proof of \thmref{THM:MAIN_CONST_CURV}.
	
	\bigskip
	\noindent\emph{Step~III: The extensions $F_n$ of $f_n$ satisfy $\dist_{G,\euc}^p(dF_n,SO(G,\euc)) \to 0 $ in $L^p(\tM)$. Therefore, there exists a local isometry $F:(\tM,G) \to (\Rd,\euc)$ with $F_n \to F$ in $W^{1,p}(\tM;\Rd)$ (modulo translations).}
	\bigskip
	
	As in step~II, the proof is almost identical to the proof of step~III in \thmref{THM:MAIN_CONST_CURV}. The only small difference in the proof is that after proving that
	\[ 
	dist_{(G,\euc)}(dF_n,SO(G,\euc)) \to 0 \quad \text{in } L^p(\tM),
	\]
	instead of  using \cite[Thm.~3]{KMS19},
	we use \cite[Cor.~7]{KMS19}, which covers the case where the target is Euclidean space. It immediately follows that there exists a subsequence of $F_n$ (not relabeled), and constants $c_n\in \R^{d+1}$, such that $F_n-c_n$ converges in $W^{1,p}((\tM,G);(\Rd,\euc))$ to a limit $F$, which is a smooth local isometry $(\tM,G)\to(\Rd,\euc)$. 
	The translations $c_n$ are needed because of the non-compactness of $\R^{d+1}$, and can be taken to be the averages of $F_n$.

	\bigskip
	\noindent\emph{Step~IV: $f:=F|_{\M \times \{0\}}$ is an isometric immersion of $\M$ in $\Rd$, such that $\b $ is its second fundamental form, and $F$ is its normal extension.}
	\bigskip
	
	The proof of this step is analogous to the proof of Step~IV in \thmref{THM:MAIN_CONST_CURV} (Proposition~\ref{F ext f}).
	Note that in this case, $F$, as an extension of $f$ (see \eqref{eq:F ext f}), has a particularly simple form,
	\beq
	\label{eq:F_ext_f_euc}
	F=f\circ\pi +t\hn_f \circ \pi.
	\eeq
	Finally, since $\g$ and $ \b $ are the first and second fundamental forms of $f$ then they must satisfy the (GCM) compatibility equations.
	
	\bigskip
	\noindent\emph{Step~V: $f_n \to f$ in $W^{1,p}(\M;\Rd)$, modulo translations.}
	\bigskip
	
	The proof is again similar to the proof in \thmref{THM:MAIN_CONST_CURV}, however simpler. Note that $ W^{1,p}(\M;\Rd)$ is a normed space (unlike $ W^{1,p}(\M;\N)$), hence there is no need to introduce an isometric embedding $ \iota $ (in this case $ \iota=Id:\Rd \to \Rd $).
	In particular, Proposition~\ref{averaged_seq} takes the simpler form:
	\begin{proposition}
		\label{averaged_seq_euc}
		There exists a sequence $ \e_n \to 0 $, $\e_n\in(0,\e]$, such that
		\[ 
		\frac{1}{2\e_n}\int_\M\int_{-\e_n}^{\e_n} \brk{|F - (F_n-c_n)|_\euc^p + | dF - dF_n|_{G',\euc}^p} dt \,\Vol_{\g} \to 0,
		\] 
		where $c_n$ are the averages of $F_n$ over $\tM$.
	\end{proposition}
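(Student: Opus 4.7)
The plan is to mirror the proof of Proposition~\ref{averaged_seq} in the Riemannian setting, taking advantage of the fact that $W^{1,p}(\M;\Rd)$ is genuinely a normed space, so no auxiliary isometric embedding $\iota$ is required. The only structural novelty is the translations $c_n$ supplied by \cite[Cor.~7]{KMS19}, but these are inert in the estimate since the quantity we control is $F_n - c_n$ to begin with.

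From Step~III we already know that $F_n - c_n \to F$ strongly in $W^{1,p}((\tM,G);(\Rd,\euc))$, i.e.
\[
\delta_n^p := \int_{\tM}\brk{|F - (F_n - c_n)|_\euc^p + |dF - dF_n|_{G,\euc}^p}\,\Vol_G \;\longrightarrow\; 0.
\]
The idea is to choose $\e_n$ just large enough that $\delta_n^p/\e_n$ still tends to zero, while $\e_n$ itself vanishes. Setting
\[
\e_n :=
\begin{cases}
\delta_n^{p/2} & \delta_n \neq 0,\\
\e/n & \delta_n = 0,
\end{cases}
\]
(and truncating by $\e$ for large $n$, which does not affect the limit) achieves this: $\e_n \to 0$, $\e_n\in(0,\e]$ eventually, and $\delta_n^p/\e_n = \delta_n^{p/2} \to 0$.

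For the estimate itself, the integration domain $\M\times[-\e_n,\e_n]$ is contained in $\tM$, so Lemma~\ref{norm and volume equiv} (equivalence of the norms and volumes induced by $G$ and $G'$) gives
\[
\frac{1}{2\e_n}\int_\M\int_{-\e_n}^{\e_n}\brk{|F - (F_n-c_n)|_\euc^p + |dF - dF_n|_{G',\euc}^p}\,dt\,\Vol_\g
\le \frac{C}{2\e_n}\int_{\tM}\brk{|F - (F_n-c_n)|_\euc^p + |dF - dF_n|_{G,\euc}^p}\,\Vol_G = \frac{C\,\delta_n^p}{2\e_n}\to 0.
\]
Here one passes from the product-structure volume $\Vol_{G'}$ to $\Vol_\g\otimes dt$ via Fubini, which is allowed because the integrand is a sum of pointwise quantities.

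There is no real obstacle: the step is a routine adaptation of Proposition~\ref{averaged_seq}. The subtlest point is simply remembering that the $W^{1,p}$ convergence we have is of $F_n - c_n$ (not $F_n$) to $F$, so one must keep the translation $c_n$ inside the norm throughout. Everything else---the power $p/2$ trick, the equivalence of $G$ and $G'$, and the inclusion $[-\e_n,\e_n]\subset[-\e,\e]$---transfers verbatim from the constant-curvature proof.
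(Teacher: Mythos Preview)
Your proposal is correct and is essentially the same argument the paper uses for Proposition~\ref{averaged_seq}, specialized to the Euclidean target (indeed the paper does not give a separate proof for Proposition~\ref{averaged_seq_euc}, simply stating that it is the simpler Euclidean form). The choice $\e_n=\delta_n^{p/2}$, the equivalence of $G$ and $G'$ via Lemma~\ref{norm and volume equiv}, and the inclusion $[-\e_n,\e_n]\subset[-\e,\e]$ all match the paper's approach verbatim.
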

	
	We now prove that $ f_n -c_n \to f $ in $ W^{1,p}(\M;\Rd)$.
	Because of the simplicity of the proof
	when the ambient space is Euclidean, we expand it below for the benefit of the readers that are mostly interested in Euclidean ambient spaces.
	Without loss of generality, we will henceforth assume that $c_n=0$.
	We start with the convergence of $ f_n \to f $ in  $ L^p(\M;\R^d) $.
	\[ 
	\begin{split}
	\int_\M \Abs{f - f_n}_\euc^p \,\Vol_{\g} &= \frac{1}{2\e_n} \int_\M \int_{-\e_n}^{\e_n} \Abs{f - f_n}_\euc^p\, dt \,\Vol_{\g}\\
	& \leq \frac{C}{2\e_n} \int_\M \int_{-\e_n}^{\e_n} \brk{\Abs{ F - F_n}_\euc^p
		+|t(\hn_{f_n}-\hn_f)|_\euc^p} \, dt \,\Vol_{\g}\\
	& \leq \frac{C}{2\e_n} \int_\M \int_{-\e_n}^{\e_n} \brk{\Abs{F - F_n}_\euc^p
		+2^p|t|^p} \, dt \,\Vol_{\g},\\
	\end{split}
	\]
	where we used \eqref{eq:F_ext_f_euc} in the first inequality.
	From \propref{averaged_seq_euc} we conclude that $ \int_\M |f - f_n|_\euc^p \,\Vol_{\g} \to 0  $ as desired.
	
	Next we prove the convergence of $ df_n \to df $ in $ L^p$.
	As before, we extend  $ P_{\parallel}$ to a map $T\tM \to T\tM $ by $ P_{\parallel}(v,w)=(v,0) $. Using Fubini's theorem and \eqref{eq:G to g norm}, we estimate the distance between $df_n$ and $df$ as follows:
	\[ 
	\begin{split}
	&\int_\M  \Abs{df -df_n}_{\g,\euc}^p \,\Vol_{\g}\\
	&\qquad = \frac{1}{2\e_n} \int_\M \int_{-\e_n}^{\e_n} \Abs{\pi^* ( df - df_n)}_{G',\euc}^p\, dt \,\Vol_{\g}\\
	&\qquad = \frac{1}{2\e_n} \int_\M \int_{-\e_n}^{\e_n} \big|\pi^* ( df - df_n) - ( dF- dF_n)\circ P_{\parallel}
	+( dF- dF_n)\circ P_{\parallel}\big|_{G',\euc}^p\, dt \,\Vol_{\g}\\
	&\qquad \leq C (I_1+I_2+I_3),\\
	\end{split}
	\]
	where
	\[ 
	\begin{split}
	I_1&=\frac{1}{2\e_n}\int_\M\int_{-\e_n}^{\e_n} \Abs{( dF- dF_n)\circ P_{\parallel}}_{G',\euc}^p \, dt \,\Vol_{\g}\\
	I_2&=\frac{1}{2\e_n}\int_\M\int_{-\e_n}^{\e_n} \Abs{\pi^*df- dF\circ P_{\parallel}}_{G',\euc}^p \, dt \,\Vol_{\g}\\
	&=\frac{1}{2\e_n}\int_\M\int_{-\e_n}^{\e_n} \Abs{t\, \pi^*(df\circ S_f) }_{G',\euc}^p \, dt \,\Vol_{\g}\\
	I_3&=\frac{1}{2\e_n}\int_\M\int_{-\e_n}^{\e_n} \Abs{dF_n\circ P_{\parallel}-\pi^*  df_n}_{G',\euc}^p \, dt \,\Vol_{\g}\\
	&=\frac{1}{2\e_n}\int_\M\int_{-\e_n}^{\e_n} \Abs{t\, \pi^*(df_n\circ S_{f_n})}_{G',\euc}^p \, dt \,\Vol_{\g}.\\
	\end{split}
	\]
	Note that in $ I_2 $ and $I_3$ we used the explicit formula for $dF$ and $dF_n$ as in \eqref{eq:dF_euc}. 
	
	We evaluate each of these three integrals separately. We start with $ I_1 $. From \propref{averaged_seq_euc} we obtain
	\[ 
	\begin{split}
	I_1&=\frac{1}{2\e_n}\int_\M\int_{-\e_n}^{\e_n} \Abs{( dF- dF_n)\circ P_{\parallel}}_{G',\euc}^p \, dt \,\Vol_{\g}\\
	&\leq \frac{1}{2\e_n}\int_\M\int_{-\e_n}^{\e_n}
	\Abs{ dF - dF_n}_{G',\euc}^p \, dt \,\Vol_{\g} \to 0.
	\end{split}
	\]
	
	Next, we evaluate $I_3$.
	\[ 
	\begin{split}
	I_3&=\frac{1}{2\e_n}\int_\M\int_{-\e_n}^{\e_n} \Abs{t\, \pi^*(df_n\circ S_{f_n})}_{G',\euc}^p \, dt \,\Vol_{\g}\\
	&\leq \frac{C}{2\e_n}\int_{-\e_n}^{\e_n}|t|^p\int_\M  \Abs{df_n\circ S_{f_n}}_{\g,\euc}^p \circ \pi  \,\Vol_{\g}\, dt\\
	&\leq \frac{C}{2\e_n}\int_{-\e_n}^{\e_n}|t|^p\, dt \to 0,
	\end{split}
	\]
	where in the passage to the second line we used \eqref{eq:G to g norm} and Fubini's theorem; in the passage to the last line we used \eqref{eq:bounds on df,dfs_f}.  
	
	$I_2$ can be evaluated in the exact same manner. To conclude
	\[ 
	\int_\M |df - df_n|_{\g,\euc}^p \,\Vol_{\g}\leq C(I_1 + I_2 +I_3 ) \to 0,
	\]
	which completes the proof that $f_n\to f$ in $W^{1,p}(\M;\Rd)$.

	\bigskip
	\noindent\emph{Step~VI: $ \hn_{f_n} \to \hn_{f} $ in $ W^{1,p}(\M;\Rd) $.}
	\bigskip
	
	As in the previous step, this step is simpler compared to its analog in the proof of Theorem~\ref{THM:MAIN_CONST_CURV}, since we do not need to use the embedding $\iota$. Furthermore, in this case $\nabla \hn_f = d\hn_f$, so there is no need to use the result of Appendix~\ref{sec:appD}.
	
	We start by proving that  $ \hn_{f_n} \to \hn_{f} $ in $ L^p $. From \eqref{eq:dF_euc} and \eqref{eq:F_ext_f_euc} we obtain that
	\[ 
	\begin{split}
	|(\hn_f-\hn_{f_n})\circ \pi|_{\euc}^{p}&=|dF(\partial_t)-dF_n(\partial_t)|_{\euc}^{p}\\
	&\leq |dF-dF_n|_{G',\euc}^{p}|\partial_t|_{G'}^{p}\\
	&=|dF-dF_n|_{G',\euc}^{p}\, ,
	\end{split}
	\] 
	where in the passage to the third line we used the fact that $ |\partial_t|_{G'}=1 $. Thus, 
	\[ 
	\begin{split}
	\int_{\M} |\hn_f-\hn_{f_n}|_{\euc}^{p} \, \Vol_{\g}
	&=\frac{1}{2\e_n}\int_\M\int_{-\e_n}^{\e_n} |(\hn_f-\hn_{f_n})\circ \pi|_{\euc}^{p}  \, dt \,\Vol_{\g}\\
	&\leq  \frac{1}{2\e_n}\int_\M\int_{-\e_n}^{\e_n} |dF-dF_n|_{G',\euc}^{p}  \, dt \,\Vol_{\g}.
	\end{split}
	\]
	From \propref{averaged_seq_euc} we conclude that $ \int_{\M} |\hn_f-\hn_{f_n}|_{\euc}^{p} \, \Vol_{\g} \to 0 $ as desired.
	
	Next we prove that  $ d\hn_{f_n} \to d\hn_{f} $ in $ L^p $. By the Weingarten equation, $ df_n\circ S_{f_n}=-d\hn_{f_n}  $; since $ S=S_f $ it follows that $ df \circ S=-d\hn_f $.
	
	Therefore,
	\[ 
	\begin{split}
	\int_{\M} |d\hn_f-d\hn_{f_n}|_{\g,\euc}^{p} \, \Vol_{\g}
	&=\int_{\M} |df_n \circ S_{f_n}-df \circ S|_{\g,\euc}^{p} \, \Vol_{\g}\\
	&\leq C \int_{\M} \brk{ |df_n \circ S_{f_n}-df_n \circ S|_{\g,\euc}^{p}
		+ |df_n \circ S- df \circ S|_{\g,\euc}^{p}}  \, \Vol_{\g}\\
	&\leq C \int_{\M} \brk{ |df_n \circ (S_{f_n}-S)|_{\g,\euc}^{p}
		+ |df_n- df|_{\g,\euc}^{p}|S|_{\g,\g}^p}  \, \Vol_{\g}\\
	&\leq C \brk{\E_p(f_n) + \int_{\M}|df_n- df|_{\g,\euc}^{p}  \, \Vol_{\g}} \, ,\\
	\end{split}
	\]
	where in the passage to the last line we used the definition of $ \E_p $; we also used the smoothness of $ S $ and the compactness of $ \M $ to obtain a global bound on $ |S|_{\g,\g}^{p} $. Finally, since $ \E_p(f_n) \to 0 $ and $ \int_{\M}|df_n- df|_{\g,\euc}^{p}  \, \Vol_{\g} \to 0 $, we conclude that
	\[ 
	\int_{\M} |d\hn_f-d\hn_{f_n}|_{\g,\euc}^{p} \, \Vol_{\g} \to 0 \, ,
	\]
	which completes the proof that $ \hn_{f_n} \to \hn_{f} $ in $ W^{1,p}(\M;\Rd) $, and thus the proof of \thmref{thm:main_euc}.
	
	\section{Discussion}
	\label{sec:Discussion}
	
	In this section we discuss some aspects and possible generalizations of the main results. 
	
	\paragraph{Constant sectional curvature}
	\thmref{THM:MAIN_CONST_CURV} assumes that $(\N,\h)$ has constant sectional curvature, an assumption that does not appear in the co-dimension zero parallel, \cite[Thm.~3]{KMS19}.
	We do not know whether \thmref{THM:MAIN_CONST_CURV} holds without this assumption (we conjecture that it does); we now highlight its role in the proof.
	
	The main use of the constant curvature assumption is in the construction of the metric $ G $ on $ \tM $. In step~II we proved that for any isometric immersion $ f:(\M,\g) \to (\N,\h) $ with second fundamental form $ \b $, $ G=F^*\h $, where $ F $ is the extension \eqref{eq:F_def} of $f$. Therefore $ G $ is uniquely determined by $ \g $ and $ \b $, as is evident in \eqref{eq:G}. If, however, $ (\N,\h) $ is not of constant sectional curvature, this is no longer true, and as a result we cannot choose $G= F^*\h$ without a priori knowing what is the limit isometric immersion.
	
	The fundamental obstacle when the sectional curvature is not constant is that any tubular neighborhood of a submanifold contains information which is not encoded in the fundamental forms of the submanifold. 
	In this case there is no metric $G$ on $\tM$ such that $\E_p(f_n)\to 0$  implies that the differentials of the normal extensions $F_n$ of $f_n$ tend to $\SO(G,\h)$.
	
	This obstacle might be overcome by replacing the fixed tubular neighborhood by a sequence of shrinking tubular neighborhoods. 
	However, such an approach would require a co-dimension zero rigidity result that is  stronger  than \cite[Thm.~3]{KMS19}, namely, a quantitative version.
	Such an extension will be considered in future works.
	
	\paragraph{Elastic energy functional}
	We now discuss our choice of the energy functional \eqref{eq:energy_euc}, and compare it to \eqref{eq:phys_energy} which is more common in the physics literature.
	Consider first the two stretching terms, 
	\beq\label{eq:stretchings}
	\dist(df,O(\g,\h)) 
	\Textand |g-f^*\h| .
	\eeq
	It is easily verified that $ \dist(df,O(\g,\h))\leq|g-f^*\h|  $, implying that \thmref{thm:main_euc} and \thmref{THM:MAIN_CONST_CURV} are valid for both choices of the stretching term.
	We opted for $ \dist(df,O(\g,\h)) $ because 
	it naturally appears throughout the proof, particularly in \propref{SO dist bound}.
	
	Next consider the two bending terms, 
	\beq\label{eq:bendings}
	|df\circ(S-S_f) | 
	\Textand 
	|\b-\b_f| .
	\eeq
	Shell models of the form ``stretching plus bending" result from a formal asymptotic expansion of a thin elastic body in co-dimension zero.
	Such an expansion assumes that $df$ is close to being an isometry, in which case
	the two bending terms are equivalent, differing in a higher order term in the thickness parameter (this is also true for the stretching terms).
	Therefore, under the assumption that the strains are uniformly small, both bending terms can be used interchangeably.
	
	However, from the perspective of Theorems~\ref{thm:main_euc} and \ref{THM:MAIN_CONST_CURV}, we cannot assume a priori that $df_n$ are uniformly close to isometries.
	While the energy $\E_p(f)$ (for both variants of $ \E_p(f) $) controls the $L^p$ norm of $df$, it does not provide any control on the pointwise norm of $df^{-1}$ (by $df^{-1}$ we mean the inverse of the restriction of $df$ to its image), since the stretching term allows for in-plane shrinking (that is, without bending) of an open neighborhood to a point  at a finite energetic cost.
	Since the ratio between the different bending terms in \eqref{eq:bendings} depends both on $|df|$ and on $|df^{-1}|$, the two bending terms are not uniformly equivalent for the minimizing sequence $f_n$, unless we assume additional control on $ df_n^{-1} $.
	
	Therefore, with either of the two stretching terms in \eqref{eq:stretchings}, our method of proof only works with the $ |df\circ(S-S_f) |$ bending term (note that the term $ df \circ S_f $  naturally appears throughout the proof, as is evident in \propref{dF} and \propref{SO dist bound}).
	
	If one replaces the stretching term with a more physical one that diverges fast enough for singular contractions, so that $\E_p(f_n)\to 0$ gives strong enough control of $|df_n^{-1}|$, then the proof can be carried out with the bending term $ |\b-\b_f| $; yet, depending on the bounds on $|df_n^{-1}|$, there might be a degradation in the exponent $q$ in the $W^{1,q}$ space for which $f_n\to f$.

	\paragraph{Spaces of Sobolev immersions}
	The space $\Imm_p(\M;\N)$ is the natural space on which the functional $\E_p$ is defined.
	In a sense, it is a co-dimension one analogue of the space $\{ f\in W^{1,p}(\Omega;\R^d) ~:~ \det f > 0 \,\text{ a.e.} \}$, however, unlike this space, $\Imm_p(\M;\N)$ has not been thoroughly studied (as far as we know), and there are several outstanding questions about it, even in the simpler case of $\N=\R^{d+1}$.
	In particular, we are not familiar with results regarding the density of smooth immersions in $\Imm_p(\M;\N)$, as well as improved regularity properties (e.g., the range of $p$ for which $\Imm_p(\M;\N)$ consists of continuous functions).

	\appendix
	\section{Proof that $ \b_\M=\b $}
	\label{sec:appA}
	
	We prove that $ \b $ is the second fundamental form of $ \M \simeq \M \times \{0\} $ in $(\tM,G)$.
	Since, by \eqref{eq:G},  $\partial_t$ is normal to $\M$, then $\b_\M$, the second fundamental form  of $\M$ in $\tM$, is given by
	\[
	\b_\M(X,Y) = (S_\M(X),Y)_{\g}, 
	\]
	where
	\[
	S_\M(X) = -P_\parallel(\nabla^G_X \partial_t), 
	\qquad X\in \Gamma(T\M)
	\]
	is the shape operator and $P_\parallel$ is the tangential projection $T\tM|_\M\to T\M$. 
	
	Let $\{x^1,\dots,x^{d+1}\}$ be local coordinates for $\tM$, where $x^{d+1} = t$.
	For $ 1\leq i \leq d $
	\[ 
	S_\M(\partial_i) = 
	-P_\parallel(\nabla_{\partial_i}^G\partial_t) = 
	-\sum_{k=1}^d \left.\Gamma_{i,{d+1}}^{k}\partial_k\right|_{t=0} \, .
	\]
	For $i,k=1,\dots,d$, the Christoffel symbols $\Gamma_{i,{d+1}}^{k}$ at $t=0$ are
	\[ 
	\begin{split}
	\Gamma_{i,{d+1}}^{k}
	&=\dfrac{1}{2}G^{kl}(\partial_iG_{l,d+1} + \partial_{d+1} G_{il} - \partial_l G_{d+1,i})\\
	&=\dfrac{1}{2}G^{kl}(0 + \partial_{t}(u_1^2(t) \pi^*\g -2u_1(t)u_2(t) \pi^*\b +u_2^2(t) \pi^*S^*\g +dt^2)_{il} - 0)\\
	&= - \g^{kl} \b_{il},
	\end{split}
	\]
	where is the passage to the last line we used that $ u_1(0)=1, u_2(0)=0 $ and $ \dot{u}_1(0)=0, \dot{u}_2(0)=1 $.
	It follows that
	\[
	(S_\M(\partial_i),\partial_k)_{\g} = \b_{ik},
	\]
	i.e., $\b_\M = \b$.
	
	\section{Proof that $ A_f $ is an isometry}
	\label{sec:appB}
	
	Recall from \eqref{eq:A_f} that for $ (p,t)\in \tM $ and $ (v,w)\in T_{(p,t)}\tM \simeq T_p\M \times \R $ 
	\[ 
	A_f(v,w)=P_{\gamma^f_p,t}\Brk{u_1(t)O(df_p)(v)-u_2(t)O(df_p)\circ S(v)+w\hn_f(p)}.
	\]
	For $ (v,w),(\xi,\eta)\in T_{(p,t)}\tM \simeq T_p\M \times \R $,
	\[ 
	\begin{split}
	&(A_f(v,w),A_f(\xi,\eta))_{\h}\\
	&\qquad =
	\brk{P_{\gamma^f_p,t}\Brk{u_1(t)O(df_p)(v)-u_2(t)O(df_p)\circ S(v)+w\hn_f(p)}\, , \,
		P_{\gamma^f_p,t}\Brk{u_1(t)O(df_p)(\xi)-u_2(t)O(df_p)\circ S(\xi)+\eta\hn_f(p)}}_{\h}\\
	&\qquad=\brk{u_1(t)O(df_p)(v)-u_2(t)O(df_p)\circ S(v)+w\hn_f(p)\, , \,
		u_1(t)O(df_p)(\xi)-u_2(t)O(df_p)\circ S(\xi)+\eta\hn_f(p)}_{\h}\\
	&\qquad=u_1^2(t)\brk{O(df_p)(v),O(df_p)(\xi)}_{\h}\\
	&\qquad \qquad -u_1(t)u_2(t) \Brk{\brk{O(df_p)(v),O(df_p)\circ S(\xi)}_{\h}+\brk{O(df_p)\circ S(v),O(df_p)(\xi)}_{\h}}\\
	&\qquad \qquad + u_2^2(t) \brk{O(df_p)\circ S(v),O(df_p)\circ S(\xi)}_{\h} + w\eta\brk{\hn_f(p),\hn_f(p)}_{\h}\\
	&\qquad=u_1^2(t)\brk{v,\xi}_{\g} -u_1(t)u_2(t)\Brk{\brk{v,S(\xi)}_{\g}+\brk{S(v),\xi}_{\g}} + u_2^2(t)\brk{S(v),S(\xi)}_{\g} +w\eta\\
	&\qquad=u_1^2(t)\brk{v,\xi}_{\g} -u_1(t)u_2(t)\Brk{\brk{S(v),\xi}_{\g}+\brk{S(v),\xi}_{\g}} + u_2^2(t)\brk{S(v),S(\xi)}_{\g} + w\eta\\
	&\qquad=u_1^2(t)\pi^*\g\brk{(v,w),(\xi,\eta)}-2u_1(t)u_2(t)\pi^*\b\brk{(v,w),(\xi,\eta)} + u_2^2(t)\pi^*S^*\g\brk{(v,w),(\xi,\eta)} + dt^2\brk{(v,w),(\xi,\eta)}\\
	&\qquad=\brk{(v,w),(\xi,\eta)}_G.
	\end{split}
	\]
	In the passage to the second equality we used that parallel transport is a linear isometry;
	in the third equality we used that $ \hn_f(p) $ is orthogonal to the image of $ O(df_p) $;
	in the fourth equality we used that $ O(df_p):T_p\M \to T_{f(p)}\N $ is an orthogonal map;
	and in the fifth equality we used that $ S $ is symmetric.

	\section{Proof of \lemref{par_aprox}}
	\label{sec:appC}
	
	We cover $ (\N,\h) $ with a finite number of precompact coordinate neighborhoods. 
	From the Lebesgue's number lemma it follows that we could have chosen $ \e $  small enough such that every unit speed geodesic $ \gamma:[-\e,\e]\to \N $ is contained in a single coordinate neighborhood. 
	We denote $ X(t):=P_{\gamma,t}(x) $, i.e., $ X(t) $ is the parallel transport of $ x $ along $ \gamma $.
	In coordinates, $ X(t) $ is the solution to the following system of linear differential equations:
	\[ 
	\dot{X}^k+\Gamma_{ij}^k(\gamma) \dot{\gamma}^iX^j=0,
	\]
	where $\Gamma_{ij}^k$ are the Christoffel symbols of $\h$.
	From the precompactness of the coordinate neighborhood the Christoffel symbols have a uniform bound. 
	Since $ \gamma $ is of unit speed and again from precompactness of the coordinate neighborhood, the coordinate components of the velocity $ \dot{\gamma}^i $ are also uniformly bounded independently of $\gamma$.
	
	This implies that $ |\dot{X^k}|\leq C\sum_j|X^j|\leq C |X|$ for some constant $C$ independent of $\gamma$.
	Therefore we obtain:
	\beq
	\label{eq:X-x_0 bound}
	\begin{split}
		|X(t)-X(0)|&\leq \int_{0}^{t} |\dot{X}(s)|\,ds
		\leq C\int_{0}^{t} |X(s)|\,ds
		\leq C|X(0)||t| +C\int_{0}^{t} |X(s)-X(0)|\,ds.
	\end{split}
	\eeq
	Note that this bound only makes sense in the coordinate neighborhood, since intrinsically $X(0)$ and $X(t)$ are not in the same fiber.
	We now bound $ \int_{0}^{t} |X(s)-X(0)|\,ds $.
	Define $ y(t)=|X(t)-X(0)| $ and $ Y(t)=\int_{0}^{t} y(s)\,ds $. Then $ \dot{Y}(t)=y(t) $ and from the above calculation we get
	\[ 
	\dot{Y}(t)-CY(t)\leq C|X(0)||t|
	\]
	We multiply both sides by $ e^{-Ct} $ and obtain
	\[ 
	\begin{split}
	\frac{d}{dt}(Y(t)e^{-Ct})&\leq C|X(0)||t|e^{-C t}\\
	Y(t)e^{-C t} -\cancel {Y(0)}&\leq C|X(0)|\int_{0}^{t} |s|e^{-C s}ds\\
	Y(t)e^{-C t}&\leq  C|X(0)| \left[-\frac{se^{-Cs}}{C}-\frac{e^{-Cs}}{C^2}\right]_{0}^{t}\\
	Y(t)e^{-C t}&\leq  C|X(0)| \left[-\frac{te^{-Ct}}{C}-\frac{e^{-Ct}}{C^2} + \frac{1}{C^2}\right]\\
	Y(t)&\leq |X(0)| \left[-t -\frac{1}{C} + \frac{e^{Ct}}{C}\right]\\
	Y(t)&\leq |X(0)| \left[\frac{Ce^{Ct}}{2}t^2\right]\\
	Y(t)&\leq |X(0)| \left[\frac{Ce^{C}}{2}|t|\right],\\
	\end{split}
	\]
	where in the last two lines we used Lagrange remainder for the Taylor series of $ e^{Ct} $ and the fact that $ |t|<1 $. 
	In the passage to the third line we assumed that $ t\geq 0 $; the case $t<0$ is similar.
	We obtain that $ \int_{0}^{t} |X(s)-X(0)|\,ds=Y(t)\leq C|X(0)||t| $. Therefore, combined with \eqref{eq:X-x_0 bound} we conclude that
	\[ 
	|X(t)-X(0)|\leq C|X(0)||t|.
	\]
	Since $ \gamma $ is of unit speed and the coordinate neighborhood is precompact we have  that 
	\[ 
	|d\iota_{{\gamma(t)}} - d\iota_{\gamma(0)}|\leq C |t|,
	\]
	where $ C $ does not depend on $ \gamma $ (it is simply obtained by a bound on the hessian of $ \iota $ and the fact that $\gamma$ is of unit speed).
	In addition there exists a uniform bound on $ d\iota $. 
	We can finally estimate $R_{\gamma,x}(t)$; in the estimate below, we again consider $X$ and $d\iota$ as matrices in their coordinates representations,
	\[ 
	\begin{split}
	|R_{\gamma,x}(t)|_\euc&=|d\iota \circ P_{\gamma,t}(x)-d\iota (x)|_\euc\\
	&\leq |d\iota_{\gamma(0)}(X(t))-d\iota_{\gamma(0)} (X(0))|
	+ |(d\iota_{{\gamma(t)}} - d\iota_{\gamma(0)})(X(t))|\\
	&\leq |d\iota_{\gamma(0)}||X(t)-X(0)|
	+|d\iota_{{\gamma(t)}} - d\iota_{\gamma(0)}||X(t)-X(0)|
	+|d\iota_{{\gamma(t)}} - d\iota_{\gamma(0)}||X(0)|\\
	&\leq C |X(0)||t|\\
	&\leq C |x|_{\g}|t|,
	\end{split}
	\]
	where we used the fact that $ |t|<1 $ and that the norm in coordinates is equivalent to $\g$ (again form precompactness). Since $ (\N,\h) $ is covered by a finite number of coordinate neighborhoods these bounds can be taken to be global. 
	\qed
	
	\section{Equivalent definitions for the $ W^{1,p} $ convergence of vector fields along $ W^{1,p} $ maps }
	\label{sec:appD}

	\begin{proposition}\label{pn:appD}
		Let $ (\M,\g) $ and $ (\N,\h) $ be two compact Riemannian manifolds. For $ 1\leq p <\infty $,
		let $ f_n $ and $ f $ be in $ W^{1,p}(\M;\N) $ and let $ V_{f_n} $ and $ V_f $ be vector fields in $ W^{1,p}(\M;T\N) $ covering $ f_n $ and $ f $ respectively, with a uniform pointwise bound (i.e., there exists $R>0$ such that for all $ p\in \M $ and $ n $, $ |V_{f_n}(p)|_{\h}\leq R $).
		
		Then $ V_{f_n} \to V_f $ in $ W^{1,p}(\M;T\N) $ if and only if $ f_n \to f $ in $ W^{1,p}(\M;\N) $ and  for some isometric embedding  $ \iota:(\N,\h) \to \R^D $ 
		\begin{align}\label{eq:vector_conv}
		&d\iota \circ V_{f_n} \to d\iota \circ V_{f} 				&\text{in } & L^p(\M;\R^D)\\
		&d\iota \circ \nabla V_{f_n} \to d\iota \circ \nabla V_{f}	&\text{in } & L^p(\M;T^*\M \otimes \R^D) \nonumber
		\end{align}
		In this case, this convergence holds for every isometric embedding $ \iota:(\N,\h) \to \R^D $.
	\end{proposition}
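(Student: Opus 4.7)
The strategy is to reduce the intrinsic convergence in $W^{1,p}(\M; T\N)$ to component-wise $W^{1,p}$ convergence via a natural (non-isometric) smooth embedding of $T\N$ into Euclidean space built from $\iota$, and then to bridge the ordinary and covariant derivatives by a Gauss-type formula.

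Given an isometric embedding $\iota: (\N, \h) \to (\R^D, \euc)$, define the smooth embedding $J: T\N \to \R^{2D}$ by $J(p, v) = (\iota(p), d\iota_p(v))$. A short computation using the horizontal/vertical splitting of $TT\N$ shows that $J$ is not isometric with respect to the Sasaki metric $\mathcal{S}_{\h,K}$---the Euclidean norm of $dJ$ picks up an additional term involving the second fundamental form of $\iota(\N) \subset \R^D$ evaluated on the base direction and the fiber vector---but its restriction to the disk subbundle of radius $R$ is a bi-Lipschitz embedding with constants depending on $R$ and on $\iota$. Since the $W^{1,p}$ topology of maps into $T\N$ with image in a compact set is independent of the chosen smooth embedding into Euclidean space (see \secref{subsec:sobolev}), $V_{f_n} \to V_f$ in $W^{1,p}(\M; T\N)$ is equivalent to $J \circ V_{f_n} \to J \circ V_f$ in $W^{1,p}(\M; \R^{2D})$. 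Since $J \circ V = (\iota \circ f, d\iota \circ V)$, this in turn decomposes into $\iota \circ f_n \to \iota \circ f$ in $W^{1,p}(\M; \R^D)$---equivalently $f_n \to f$ in $W^{1,p}(\M; \N)$---together with $d\iota \circ V_{f_n} \to d\iota \circ V_f$ in $W^{1,p}(\M; \R^D)$.

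The latter $W^{1,p}$ convergence is, by definition, the $L^p$ convergence of the first line of \eqref{eq:vector_conv} together with $L^p$ convergence of the ordinary Euclidean derivative $d(d\iota \circ V_{f_n})$. The crucial identity bridging this ordinary derivative to the intrinsic covariant derivative $\nabla V$ is the Gauss formula for the submanifold $\iota(\N) \subset \R^D$: for $V$ a vector field along $f$,
\[
d(d\iota \circ V) = d\iota \circ \nabla V + \II_\iota(df, V),
\]
where $\II_\iota$ is the $\R^D$-valued second fundamental form of $\iota(\N) \subset \R^D$. This identity holds pointwise almost everywhere for $f \in W^{1,p}(\M; \N)$ and $V \in W^{1,p}(\M; T\N)$ by approximation with smooth maps together with the chain rule for weakly differentiable maps between manifolds \cite[Prop.~1.9]{CVS16}, combined with the intrinsic definition of $\nabla V$ via the connector operator in \eqref{eq:connector cd}. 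Given $df_n \to df$ in $L^p$, the uniform bound $|V_{f_n}|_\h \leq R$, and $V_{f_n} \to V_f$ in $L^p$, the smoothness of the bilinear map $\II_\iota$ on the compact disk subbundle of radius $R$ yields $\II_\iota(df_n, V_{f_n}) \to \II_\iota(df, V_f)$ in $L^p$ by standard dominated convergence. Consequently, $L^p$ convergence of $d(d\iota \circ V_{f_n})$ is equivalent to $L^p$ convergence of $d\iota \circ \nabla V_{f_n}$, which is the second line of \eqref{eq:vector_conv}.

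Piecing everything together yields the equivalence of $W^{1,p}(\M; T\N)$-convergence with $f_n \to f$ in $W^{1,p}(\M; \N)$ and \eqref{eq:vector_conv} for the chosen isometric embedding $\iota$; and since the intrinsic convergence is independent of $\iota$, the conditions \eqref{eq:vector_conv} for one isometric embedding automatically imply the same conditions for every other. The main technical hurdle is the rigorous justification of the Gauss formula in the Sobolev regularity at hand, for which the uniform bound $|V_{f_n}|_\h \leq R$---placing all maps in a compact disk subbundle where smooth approximation and dominated convergence are straightforward---is essential. The verification that $J$ yields an equivalent Sobolev topology on $T\N$-valued maps is the other ingredient requiring care, but again it reduces to the standard manifold-independence statement for $W^{1,p}$ once one restricts to the compact disk subbundle.
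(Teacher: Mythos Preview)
Your approach is correct and genuinely different from the paper's. Both proofs use the same embedding $J=T\iota:T\N\to\R^{2D}$ (the paper denotes it $T\iota$) and both exploit the compactness of the disk subbundle of radius $R$ to transfer between embeddings. The divergence is in how the derivative convergence is handled.

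The paper never computes the Euclidean derivative $d(d\iota\circ V)$. Instead it invokes the intrinsic characterization of $W^{1,p}$ convergence from \cite[Prop.~4.4--4.5]{CVS16}: a sequence converges in $W^{1,p}$ iff the derivatives converge in measure and the sequence of norms $|dV_{f_n}|_{\g,\mathcal S_\h}$ converges in $L^p$. The Sasaki identity $|dV|_{\mathcal S_\h}^2=|df|_\h^2+|\nabla V|_\h^2$ then reduces the norm convergence directly to the hypotheses, and the measure convergence is obtained via the connector and a subsequence argument together with \cite[Prop.~4.2.6]{willem13}. Your route replaces all of this by the Gauss formula $d(d\iota\circ V)=d\iota\circ\nabla V+\II_\iota(df,V)$, reducing the problem to showing $\II_\iota(df_n,V_{f_n})\to\II_\iota(df,V_f)$ in $L^p$.

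What each buys: your argument is more elementary and self-contained, avoiding the \cite{CVS16} convergence-in-measure machinery entirely; the paper's argument is more intrinsic and sidesteps the need to justify the Gauss formula at Sobolev regularity. One caution on your side: the phrase ``standard dominated convergence'' for the $\II_\iota$ term is slightly glib, since $df_n$ is only $L^p$-bounded, not $L^\infty$-bounded, and the base points $f_n,f$ differ; the clean way is to split $|df_n|^p\le 2^{p-1}(|df_n-df|^p+|df|^p)$ and apply dominated convergence to the second piece (using $|V_{f_n}|\le R$ and a.e.\ convergence along a subsequence), which is routine but worth stating.
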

	
	\begin{proof}
		We first show that the $ L^p{(\M;T\N)} $ convergence of $ V_{f_n} $ is defined independently of the choice of metric on $T\N$.
		Every metric $ \tilde{\h} $ on $ T\N $ induces a distance function $ d_{\tilde{\h}} $ and thus the $ L^p{(\M;T\N)} $ convergence can be defined by:
		\beq
		\label{eq:intrinsic L^p}
		\int_{\M} d^p_{\tilde{\h}}(V_f,V_{f_n}) \Vol_{\g} \to 0.
		\eeq
		We show that this convergence does not depend on the choice of the metric $ \tilde{\h} $. Let $ C\subset T\N $ be a compact submanifold of $ T\N $ such that $ V_{f_n} $ is contained in it. Such a submanifold exists since $ \N $ is compact and $ V_{f_n} $ has a uniform pointwise bound. Let $\tilde{\h}_{C} $ be the induced metric on $ C $ and $ d_{\tilde{h}_{C}} $ the distance function induced by $ \tilde{\h}_{C} $. 
		Since the extrinsic ($ d_{\tilde{\h}} $) and intrinsic ($ d_{\tilde{h}_{C}} $) distances on a compact manifold are equivalent (this follows from a compactness argument, alternatively see \cite{m.kohan}), it follows that changing $ d_{\tilde{\h}} $ to $ d_{\tilde{h}_{C}} $ in \eqref{eq:intrinsic L^p} does not change the $L^p$ convergence.
		Furthermore, if we endow $ T\N $ with a different metric $ \tilde{r} $ then from the compactness of $ C $, $ \tilde{\h}_{C} $ and $ \tilde{r}_{C} $ are equivalent and in turn so are $ d_{\tilde{\h}_{C}} $ and $ d_{\tilde{r}_{C}} $. 
		Therefore, the $ L^p $ convergence of $ V_{f_n} $ as defined in \eqref{eq:intrinsic L^p} does not depend on the choice of metric on $ T\N $. 
		
		Next, we prove that the definition of $ L^p $ convergence by isometrically embedding $ (T\N,\h) $ into $ (\R^K,\euc) $ is equivalent to the definition in \eqref{eq:intrinsic L^p}.  Let $ \tilde{\h} $ be a metric on $ T\N $ and let $ i:(T\N,\tilde{\h}) \to (\R^K,\euc) $ be a smooth isometric embedding into Euclidean space. Since $ i(C) \subset \R^K $ is an embedded compact submanifold, the extrinsic and intrinsic distances on it are equivalent. We obtain that
		\[ 
		\begin{split}
		\int_{\M} d^p_{\tilde{\h}}(V_f,V_{f_n}) \Vol_{\g} \to 0
		&\iff \int_{\M} d^p_{\tilde{\h}_{C}}(V_f,V_{f_n}) \Vol_{\g} \to 0\\
		&\iff \int_{\M} d^p_{\euc_{i(C)}}(V_f,V_{f_n}) \Vol_{\g} \to 0\\	
		&\iff \int_{\M} |i\circ V_f - i\circ V_{f_n}|_{\euc}^p \Vol_{\g} \to 0,
		\end{split}
		\]
		and therefore the two definitions are equivalent. Since the $ L^p $ convergence defined by \eqref{eq:intrinsic L^p}  does not depend on $ \tilde{h} $ so does the $ L^p $ convergence by embedding into euclidean space.
		\bigskip

		\emph{$(\Leftarrow): $ Assume that $ f_n \to f $ in $ W^{1,p}(\M;\N) $, \eqref{eq:vector_conv} holds and prove that $ V_{f_n} \to V_f $ in $ W^{1,p}(\M;T\N) $. }
		First, we prove that $ V_{f_n} \to V_{f}  $ in $ L^p(\M;T\N) $. Let $ \iota: (\N,\h) \to (\R^D,\euc) $ be a smooth isometric embedding (note that this is an embedding of $ \N $ and not $ T\N $). Let $ T\iota: T\N \to \R^D \times \R^D $ be its differential (in the sense of a bundle morphism) defined by $ T\iota (p,v)=(\iota(p),d\iota_p(v)) $. 
		It is easily verified that $ T\iota $ is a smooth immersion, i.e., $ T^2\iota: TT\N \to \R^{4D} $ is injective. 
		If we endow $ T\N $ with the pullback metric $ (T\iota)^*\euc $ then $ T\iota  $ is a smooth isometric embedding. 
		Recall that $ V_f(p) = (f(p),\nu_f(p)) $ where we usually identify $ V_{f}(p) $ with $ \nu_f(p) $ (see \secref{subsec:sobolev_immersions}). 
		We then have
		\[ 
		\begin{split}
		&\int_{\M} |T\iota \circ V_f - T\iota \circ V_{f_n}|_{\euc}^p \Vol_{\g}\\
		&\qquad \qquad=\int_{\M} {\brk{|\iota\circ f - \iota \circ f_n|_{\euc}^2+|d\iota\circ \nu_{f} - d\iota \circ \nu_{f_n}|_{\euc}^2}}^{p/2} \Vol_{\g}\\
		&\qquad \qquad \leq C \int_{\M} |\iota\circ f - \iota \circ f_n|_{\euc}^p+|d\iota\circ \nu_{f} - d\iota \circ \nu_{f_n}|_{\euc}^p \Vol_{\g} \to 0,\\
		\end{split}
		\]
		where in the passage to the second line we used the definition of $ T\iota $ and $ V_f $; the limit is obtained since $ f_n \to f $ in $ W^{1,p}(\M;\N) $ and $ d\iota \circ V_{f_n} \to d\iota \circ V_f $ in $ L^p(\M;\R^D) $ (where we identified $ V_f $ and $ \nu_f $ in this instance). Since the $ L^p(\M;T\N) $ convergence does not depend on the choice of metric on $ T\N $ this convergence is equivalent to the "standard" $ L^p(\M;T\N) $ convergence when $ T\N $ is endowed with the Sasaki metric $ S_{\h} $ \eqref{eq:Sasaki} (i.e., the one defined by isometrically embedding $(T\N,S_{\h})$ into Euclidean space).
		
		Next, we prove the convergence $ dV_{f_n} \to dV_f $ in $ L^p(\M;T^*\M\otimes TT\N ) $, i.e., that for every smooth isometric embedding $ i:(T\N,S_\h) \to (\R^K,\euc) $ we obtain the convergence $ d(i\circ V_{f_n}) \to d(i\circ V_f) $ in $ L^p(\M; T^*\M \otimes R^K) $. From \cite[Def.~4.1, Prop.~4.4]{CVS16} it suffices to prove that $ dV_{f_n} $ converges to $ dV_f $ in measure and that the sequence of norms $ \brk{|dV_{f_n}|_{\g,S_\h}} $ converges to $ |dV_{f}|_{\g,S_\h} $ in $ L^p(\M;\R) $.
		
		\noindent We start by proving the $ L^p $ convergence of the norms. 
		\[ 
		\begin{split}
		&\int_{\M} \left| |dV_{f}|_{\g,S_\h}-|dV_{f_n}|_{\g,S_\h} \right|^p \Vol_{\g}\\
		&\qquad \qquad=\int_{\M} \left| \sqrt{|df|_{\g,\h}^2+|\nabla V_{f}|_{\g,\h}^2}-\sqrt{|df_n|_{\g,\h}^2+|\nabla V_{f_n}|_{\g,\h}^2} \right|^p \Vol_{\g}\\
		&\qquad \qquad=\int_{\M} \left|\frac{ |df|_{\g,\h}^2+|\nabla V_{f}|_{\g,\h}^2-|df_n|_{\g,\h}^2-|\nabla V_{f_n}|_{\g,\h}^2}
		{\sqrt{|df|_{\g,\h}^2+|\nabla V_{f}|_{\g,\h}^2}+\sqrt{|df_n|_{\g,\h}^2+|\nabla V_{f_n}|_{\g,\h}^2}} \right|^p \Vol_{\g}\\
		&\qquad \qquad\leq C \int_{\M} \left|\frac{ |df|_{\g,\h}^2-|df_n|_{\g,\h}^2}{|df|_{\g,\h}+|df_n|_{\g,\h}}\right|^p
		+\left|\frac{ |\nabla V_{f}|_{\g,\h}^2-|\nabla V_{f_n}|_{\g,\h}^2}{|\nabla V_{f}|_{\g,\h}+|\nabla V_{f_n}|_{\g,\h}} \right|^p \Vol_{\g}\\
		&\qquad \qquad= C \int_{\M} \left| |d\iota \circ df|_{\g,\euc}-|d\iota\circ df_n|_{\g,\euc}\right|^p
		+\left| |d\iota \circ\nabla V_{f}|_{\g,\euc}-|d\iota \circ\nabla V_{f_n}|_{\g,\euc} \right|^p \Vol_{\g}
		\to 0,\\
		\end{split}
		\]
		where in the passage to the second line we used the definition of the Sasaki metric (with relation to the metric $ \h $ and the Levi-Civita connection) and that $ d\pi \circ d\hn_f=df $; in the passage to the fifth line we used that $ \iota:(\N,\h) \to (\R^D,\euc) $ is an isometric immersion. Finally, the limit is obtained since $ f_n \to f $ in $ W^{1,p}(\M;\N) $ and $ d\iota \circ \nabla V_{f_n} \to d\iota \circ \nabla V_{f} $ in $  L^p(\M;T^*\M \otimes \R^D) $.

		\noindent We now prove that $ dV_{f_n} $ converges to $ dV_f $ in measure. From \cite[Prop.~4.5]{CVS16} it suffices to prove that $ V_{f_n} $ converges to $ V_f $ in measure and that $ \brk{|dV_{f_n}|_{\g,S_\h}} $ converges to $ |dV_{f}|_{\g,S_\h} $ in $ L^1(\M;\R) $.
		These requirements are satisfied immediately. First, the $ L^p $ convergence of $ V_{f_n}  $ to  $ V_f $ implies convergence in measure. Second, since $ \M $ is compact the $ L^p $ convergence of the norms implies $ L^1 $ convergence.
		
		\bigskip
		
		\emph{$ (\Rightarrow): $ Assume that $ V_{f_n} \to V_f $ in $ W^{1,p}(\M;T\N) $ and prove that $ f_n \to f $ in $ W^{1,p}(\M;\N) $ and \eqref{eq:vector_conv} holds.}
		Let $ \pi:T\N \to \N $ be the projection, then $ f_n=\pi \circ V_{f_n} $. From the definition of the Sasaki metric \eqref{eq:Sasaki} and the fact that $ V_{f_n} \to V_f $ in $ W^{1,p}(\M;T\N) $ we obtain that $ f_n \to f $ in $ W^{1,p}(\M;\N) $.
		$ V_{f_n} \to V_f $ in $ W^{1,p}(\M;T\N) $ implies that $ V_{f_n} \to V_f $ in $ L^p(\M;\N) $, and as shown above, this convergence does not depend on the choice of metric on $ T\N $.  
		Let $ \iota: (\N,\h) \to (\R^D,\euc) $ be a smooth isometric embedding. As shown above, if we endow $ T\N $ with the pull back metric $( T\iota)^*\euc $, then $ T\iota $ is a smooth isometric embedding. Therefore, $ \int_{\M} |T\iota \circ V_f - T\iota \circ V_{f_n}|_{\euc}^p \,\Vol_{\g} \to 0 $.  We then get
		\[ 
		\int_{\M} |d\iota\circ \nu_{f} - d\iota \circ \nu_{f_n}|_{\euc}^p\,\Vol_{\g}
		\leq \int_{\M} |T\iota \circ V_f - T\iota \circ V_{f_n}|_{\euc}^p \,\Vol_{\g} \to 0,
		\]
		where the inequality follows from the fact that one of the components of $T\iota \circ V_f$ is $d\iota\circ \nu_{f}$.
		Under the identification of $ V_{f_n} $ with $ \nu_{f_n} $ we obtain that $ d\iota \circ V_{f_n} \to d\iota \circ V_{f} $  in $ L^p(\M;\R^D) $.
		
		$ V_{f_n} \to V_f $ in $ W^{1,p}(\M,T\N) $ implies that $ dV_{f_n} \to dV_f $ in measure \cite[Def.~4.1, Prop.~4.4]{CVS16}. This in turn implies that every subsequence of $ dV_{f_n} $ has a subsequence that converges a.e.\ to $ dV_f $. 
		Since the connector $ K:T\N \to TT\N $ is continuous, $ d\iota:T\N \to R^D $ is continuous and $ \nabla V_{f_n}=K\circ dV_{f_n}  $ we obtain that every subsequence of $ d\iota\circ \nabla V_{f_n} $ has a subsequence that converges a.e.\ to $ d\iota \circ \nabla V_f $. From \cite[Def.~4.1]{CVS16} we know that $ V_{f_n} \to V_f $ in $ W^{1,p}(\M,T\N) $ implies that the sequence of norms $ \brk{|dV_{f_n}|_{\g,S_\h}} $ converges to $ |dV_{f}|_{\g,S_\h} $ in $ L^p(\M;\R) $. 
		From the definition of the Sasaki metric \eqref{eq:Sasaki} and the fact that $ \iota:(\M,\g) \to (\R^D,\euc) $ is an isometric embedding we obtain that the sequence of norms $ \brk{|d\iota \circ \nabla V_{f_n}|_{\g,\euc}} $ converges to $ |d\iota \circ \nabla V_{f}|_{\g,\euc} $ in $ L^p(\M;\R) $.
		To conclude, every subsequence of $ d\iota\circ \nabla V_{f_n} $ has a subsequence that converges a.e.\ to $ d\iota \circ \nabla V_f $, and $ \lim_{n\to \infty}\int_{\M} |d\iota\circ \nabla V_{f_n}|_{\g,\euc}^p\Vol_{\g}=\int_{\M} |d\iota\circ \nabla V_{f}|_{\g,\euc}^p\Vol_{\g}   $.
		From \cite[prop.4.2.6]{willem13} we obtain that the subsequence (of the subsequence) converges to $ d\iota\circ \nabla V_{f} $ in $ L^p(\M;\R^D) $. 
		Finally, since every subsequence of $ d\iota\circ \nabla V_{f_n} $ has a subsequence that converges to $ d\iota \circ \nabla V_f $ in $ L^p(\M;\R^D) $ we get that
		$ d\iota \circ \nabla V_{f_n} \to d\iota \circ \nabla V_{f}$ in $ L^p(\M;T^*\M \otimes \R^D)$,
		which completes the proof. 
	\end{proof}
	\footnotesize{
	\bibliographystyle{amsalpha}
	\newcommand{\etalchar}[1]{$^{#1}$}
	\providecommand{\bysame}{\leavevmode\hbox to3em{\hrulefill}\thinspace}
	\providecommand{\MR}{\relax\ifhmode\unskip\space\fi MR }
	\providecommand{\MRhref}[2]{%
		\href{http://www.ams.org/mathscinet-getitem?mr=#1}{#2}
	}
	\providecommand{\href}[2]{#2}
	}
	
\end{document}